\newif\ifdviwin
\def\E{\mathbb{E}}
\def\R{\mathbb{R}}
\def\D{\mathbb{D}}
\def\H{\mathbb{H}}
\def\S{\mathbb{S}}
 \newtheorem{defi}{Definition}
  \newtheorem{teo}{Theorem}
 \newtheorem{cor}{Corollary}
 \newtheorem{lem}{Lemma}
  \newtheorem{glemma}{Generalized lemma}
 \newtheorem{remark}{Remark}
    \newtheorem{asser}{{Assertion}}[section]
 \newenvironment{proof1}{\rm \trivlist \item[\hskip \labelsep{\it
      Proof of Assertion \ref{asser1}}.]}{\par\nopagebreak \hfill $\square$ \endtrivlist}
      \newenvironment{proof2}{\rm \trivlist \item[\hskip \labelsep{\it
      Proof of Lemma \ref{l41}}.]}{\nopagebreak \hfill $\Box$ \endtrivlist}
          \newenvironment{proof3}{\rm \trivlist \item[\hskip \labelsep{\it
      Proof of Theorem \ref{main}}.]}{\nopagebreak \hfill $\square$ \endtrivlist}
          \newenvironment{proof4}{\rm \trivlist \item[\hskip \labelsep{\it
      Proof of Theorem \ref{ths41}}.]}{\nopagebreak \hfill $\square$ \endtrivlist}
          \newenvironment{proof5}{\rm \trivlist \item[\hskip \labelsep{\it
      Proof of Theorem \ref{ths42}}.]}{\nopagebreak \hfill $\square$ \endtrivlist}
\numberwithin{equation}{section}
\begin{document}
\renewcommand{\thefootnote}{}
\footnotetext{Research partially supported by Ministerio de Educaci\'on Grant No. MTM2010-19821, Junta de Andaluc\'\i a Grants No. FQM325, No. P06-FQM-01642. }
\title{Complete Surfaces with Ends of Non Positive Curvature}
\author{Jos\'e A. G\'alvez, Antonio Mart\'{\i}nez and Jos\'e L. Teruel}
%\date{ }
\maketitle
\begin{abstract} 
In this paper we extend  Efimov's Theorem by proving that any complete surface in $\R^3$ with  Gauss curvature  bounded above by a negative constant outside a compact set has  finite total curvature, finite area and is properly immersed. Moreover, its ends must be  asymptotic to half-lines.  We also give a partial solution to Milnor's conjecture by studying  isometric immersions in a space form of complete surfaces which satisfy that outside a compact set they have  non positive Gauss curvature  and the square of a principal curvature function is bounded from below by a positive constant.
\end{abstract}
\section{Introduction}
An important part in the study of complete  surfaces  of non positive curvature in $\R^3$
has been  directed at nonexistence of isometric immersions. The investigation of the isometric immersion of metrics with negative curvature goes back to Hilbert. He proved  in 1901, see \cite{H,H2}, that the full hyperbolic plane can not be isometrically immersed in $\R^3$. This means, it is impossible to extend a regular piece of a surface of constant negative curvature without the appearance of singularities. Hilbert's theorem has attracted the attention of many mathematician and an important amount of work has been done in this direction (see for example \cite{A, B, B2,E,HW,Ho,O,St,St2}). 

A next natural step, conjectured by many geometers  \cite{CV} was to extend such a result to complete surfaces which Gauss curvature is bounded above by a negative constant. The final solution to this problem was obtained by Efimov in 1963 more than sixty years later, see \cite{E,KM}. Efimov proved that no surface can be 
${\cal C}^2$-immersed in the  Euclidean 3-space so as to be complete in the induced Riemannian metric, with Gauss curvature $K \leq \text{\rm const} < 0$.    In the  following years  Efimov extended this result in several ways, see \cite{E2}.

Although Efimov's proof is very delicate,  it is ingenious and does not depend upon sophisticated or modern techniques.  In fact, it is  derived from a general result  about ${\cal C}^1$-immersions between planar domains, see \cite[Lemma 1]{E} and \cite[Main Lemma]{KM}.  Here, we will show that the same method can be applied to study complete surfaces which  Gauss  curvature is negative  and bounded away from zero outside a compact subset.  We shall prove, see Theorem  \ref{ths42}, that: 
 \begin{quote}
{\sl Any complete ${\cal C}^2$-immersed surface in $\R^3$ with Gauss curvature satisfying $K\leq \text{\rm const} <0$ outside a compact subset,  has finite total curvature, finite area and  is properly immersed. Moreover, its ends are   asymptotic to half-lines, see Figure \ref{f1a}.}
\end{quote}
\begin{figure}[H]
\center{\includegraphics[width=0.4\linewidth]{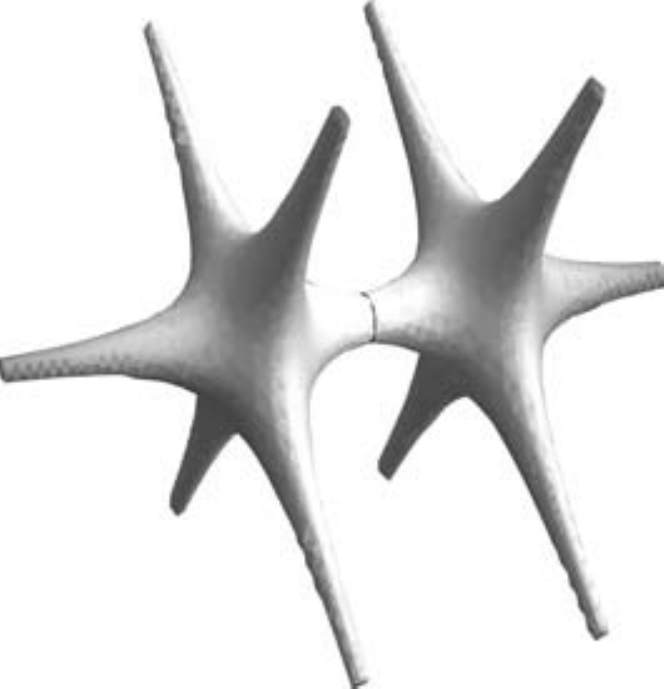}\hspace{1cm}
\includegraphics[width=0.38\linewidth]{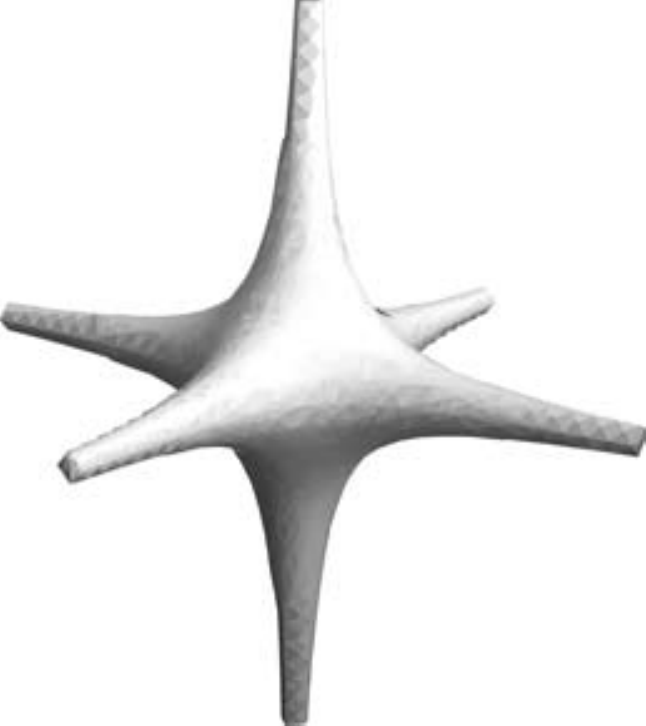}}
\caption{Complete surfaces with negatively curved ends.}
\label{f1a}
\end{figure}

A more general result than the above mentioned  is indeed proven, see Theorem \ref{main} and Theorem \ref{ths41}.

Despite the considerable progress in the understanding of negatively curved surfaces, important questions suggested by Hilbert's Theorem remain unanswered to this day. Among the most interesting  open problems  we mention the following conjecture due  to John Milnor, (see \cite{KO}),
 \begin{quote}
{\sc Milnor's Conjecture}.  {\sl Assume $\psi: \Sigma \longrightarrow \R^3$ is a complete, umbilic free immersion so that their  principal curvatures $k_1$ and $k_2$ satisfy 
$$ k_1^2 + k_2^2\geq \text{\rm const}>0.$$ Then either the Gauss curvature $K$ of $\psi$ changes sign  or else $K\equiv 0$}.
\end{quote}

In this paper,  we also aim  at taking  a small step toward the  solution of the above mentioned conjecture and its extension to other non-Euclidean space forms. First, we prove (Theorem \ref{th1}):
 \begin{quote}
 {\sl Any complete surface of non positive Gauss curvature  isometrically immersed in $\R^3$ with one of its principal curvature functions $k_i$ satisfying
 $$ k_i^2 \geq \text{\rm const} >0,$$
must be  a generalized cylinder.}
\end{quote}

An interesting  consequence  of this result is, for instance, that  generalized cylinders are the only complete surfaces  with non positive Gauss curvature, isometrically immersed in $\R^3$ with mean curvature bounded away from zero (Corollary \ref{cor1}).  As another consequence,  we  generalize a result of Klotz and Osserman in \cite{KO} by showing  that any complete special Weingarten surface in $\R^3$ on which  the Gauss curvature does not change of sign is either a round sphere or a right circular cylinder. This gives a positive answer to a question raised by Sa Earp and Toubiana in \cite{ST}.

Although an analogous to Efimov's Theorem in non-Euclidean space forms remains as an open problem to this day, a partial solution was obtained by Schlenker in \cite{S}. Using a slightly different approach, the above mentioned  results can be extended to other ambient spaces by application of  the abstract theory of Codazzi pairs,  that is,  pairs $(I,II)$ of real quadratic forms on an abstract surface, where $I$ is a Riemann metric and $II$ satisfies the Codazzi-Mainardi equations of the classical surface theory with respect to the metric $I$. 

An abstract result  about  Codazzi  pairs, see Theorem \ref{at1}, let us to prove some   consequences about  immersions in the hyperbolic 3-space $\H^3$  of curvature $-1$ and in the sphere $\S^3$ of curvature $1$. Actually, we prove (see Corollaries \ref{ac1} and \ref{ac2}):
 \begin{quote} 
 {\sl No surface can be 
immersed in  $\H^3$ (resp. $\S^3$) if it is complete in the induced Riemannian metric, with Gauss curvature $K \leq -1$ (resp. $K \leq const <0$) and one of its principal curvature functions $k_i$ satisfying $$k_i^2\geq \epsilon >0,$$ for some positive constant $\epsilon$.}
\end{quote}

About the geometry of surfaces with ends of non positive curvature in non Euclidean space forms we can prove, see Corollaries \ref{c4} and \ref{c5}:
 \begin{quote} 
 {\sl Consider a   complete immersion in  $\H^3$ (resp. $\S^3$) satisfying that  outside a compact subset,
 \begin{itemize} 
 \item the  Gauss curvature $K \leq -1$ (resp. $K \leq const <0$) and 
 \item  $k_i^2\geq \epsilon >0$, $\epsilon\in \R,$ where $k_i$ is one of its principal curvature functions.
\end{itemize}
Then it   has finite total curvature and, in particular, it  has finite topology and finite area.}
\end{quote}
 
\section{A step in the solution of  Milnor's conjecture}
In this section we prove  a partial solution to Milnor's conjecture and extend  the result of Klotz and Osserman in \cite{KO} answering  the question raised by Sa Earp and Toubiana in \cite{ST}.

Along  all the section we shall always assume that the differentiability used is  ${\cal C}^\infty$ but the differentiability requirements are actually much lower. Indeed, for most of the cases ${\cal C}^3$-differentiability  will be enough.

We  also suppose that $\Sigma$ is an oriented surface (otherwise we would work with its oriented two-sheeted covering). 

First, we will show the following  result holds:
\begin{teo}\label{th1}
Let  $\psi: \Sigma \longrightarrow \R^3$  be a complete immersed surface of  non positive curvature.  If one of its principal curvatures $k_i$ satisfies
$$ k_i^2 \geq const >0, $$ 
then $\psi(\Sigma)$ is a generalized cylinder in $\R^3$.
\end{teo}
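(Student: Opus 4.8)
The plan is to normalize signs and build a global principal frame, then reduce the statement to the flatness $K\equiv 0$, and finally prove flatness by a completeness/blow-up argument. First I would observe that $k_i^2\ge c>0$ forces the principal curvature $k_i$ (call it $k_1$) to be nowhere zero; since $\Sigma$ is connected and, after reversing the orientation if necessary, we may take $k_1>0$, continuity gives $k_1\ge\sqrt c$ everywhere. Because $K=k_1k_2\le 0$, the other principal curvature satisfies $k_2\le 0$; in particular $k_1>k_2$ everywhere, so there are no umbilic points and the principal directions define smooth line fields. After passing to a double cover if necessary, these give global orthonormal unit vector fields $e_1,e_2$. Moreover, since $\Sigma$ is complete and $e_2$ has unit length, its integral curves (the lines of curvature tangent to $e_2$) are defined for all arc length $s\in\R$: a unit-speed curve has finite length on any bounded time interval, hence stays in a compact set by Hopf--Rinow and cannot escape in finite time.

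Second, I would reduce the theorem to proving $K\equiv 0$. Suppose this is known. Since $k_1\ge\sqrt c>0$ there are no planar points, so $\psi$ is a complete flat immersion free of planar points; by the classical structure theorem for complete flat surfaces in $\R^3$ (Hartman--Nirenberg), $\psi(\Sigma)$ is then a generalized cylinder. Concretely, flatness means $k_2\equiv 0$, so writing the Codazzi equation in the principal frame, $e_1(k_2)=(k_1-k_2)\,\langle\nabla_{e_2}e_2,e_1\rangle$, forces $\langle\nabla_{e_2}e_2,e_1\rangle\equiv 0$; since the normal curvature in the $e_2$-direction is $k_2\equiv 0$ as well, each $e_2$-line of curvature has vanishing geodesic and normal curvature and is therefore a straight line of $\R^3$, complete by the previous paragraph. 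These lines foliate $\Sigma$ and, using flatness again, share a common direction, exhibiting $\psi(\Sigma)$ as a cylinder over a curve.

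The heart of the proof, and the step I expect to be the main obstacle, is showing $K\equiv 0$. I would argue by contradiction, assuming $k_2(p)<0$ at some point $p$. The tools are the Codazzi equations $e_2(k_1)=(k_2-k_1)\langle\nabla_{e_1}e_1,e_2\rangle$ and $e_1(k_2)=(k_1-k_2)\langle\nabla_{e_2}e_2,e_1\rangle$ together with the Gauss equation $K=k_1k_2$ expressed through the structure functions of the frame. The idea is to follow a suitable complete curve emanating from $p$ --- either the $e_2$-line of curvature or, on the region $\{K<0\}$, an asymptotic line, for which the Beltrami--Enneper relation gives torsion $\tau^2=-K$ --- and to extract along it a Riccati-type differential inequality for the relevant curvature quantity. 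The uniform gap $k_1-k_2\ge\sqrt c$ should then force this quantity to blow up after a finite arc length, contradicting the fact that the curve is defined for all $s$, and thus ruling out any point where $K<0$.

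The delicate point is that Codazzi only controls the derivatives of $k_1,k_2$ transverse to their own lines of curvature, so it does not by itself close into an ODE along a single curve; one must combine it with the Gauss equation, and possibly switch between curvature lines and asymptotic lines, to obtain a self-contained evolution, and then prove genuine finite-time blow-up rather than mere monotonicity. This is precisely where the hypothesis $k_i^2\ge c$ --- a uniform gap, not merely $k_i\ne 0$ --- is indispensable: it supplies the positive lower bound that drives the blow-up and eliminates the region of strictly negative curvature.
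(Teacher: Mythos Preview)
Your reduction to ``$K\equiv 0$ implies generalized cylinder'' is fine, but the core step --- proving $K\equiv 0$ --- is not actually carried out; it is only a hope. You yourself identify the obstruction: the Codazzi equations give $e_2(k_1)$ and $e_1(k_2)$, i.e.\ derivatives of each principal curvature \emph{transverse} to its own curvature line, so along a single curvature line (or asymptotic line) there is no closed first-order ODE for any natural scalar, and no Riccati inequality falls out of the hypotheses $k_1\ge\sqrt c$, $k_2\le 0$ alone. Combining with the Gauss equation does not help directly, because $K=k_1k_2$ introduces the geodesic curvatures of both families of curvature lines simultaneously, and the assumption gives no control on $e_1(k_1)$ or $e_2(k_2)$. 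What you have written is a description of the difficulty, not a way around it; as stated, the proposal is a plan with its decisive step missing.

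The paper's proof avoids local ODE analysis entirely. It passes to the parallel surface $\psi_\epsilon=\psi+\tfrac{2}{\epsilon}N$; the condition $k_2\ge\epsilon>0\ge k_1$ makes $\psi_\epsilon$ an immersion whose induced metric dominates $I$ (hence is complete) and whose Gauss curvature is $\ge 0$. Sacksteder's theorem then forces $\psi_\epsilon(\Sigma)$ to be either a generalized cylinder or the boundary of a convex body. The convex-body alternatives are ruled out by elementary touching arguments (a compact surface, or one trapped in a cone, must have an elliptic point, contradicting $K\le 0$ for $\psi$). Thus the uniform gap $k_i^2\ge c$ is used not to drive a blow-up but to guarantee that the parallel map is a complete immersion and to flip the sign of the curvature, reducing the problem to the global structure theory of nonnegatively curved hypersurfaces.
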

\begin{proof}
Let us denote by $k_1$ and $k_2$ the principal curvatures of $\psi$. Up to a change of orientation, we can assume that
\begin{equation}\label{condition1}k_2\geq \epsilon >\frac{\epsilon}{2}>0 > k_1, \qquad \text{\rm for some positive constant } \epsilon.\end{equation} 

Consider $\psi_\epsilon$ the parallel map of $\psi$ to a distance  $2/\epsilon$, that is, 
$$\psi_\epsilon := \psi + \frac{2}{\epsilon} N,$$
where  $N:\Sigma \longrightarrow \S^2$ is the Gauss map of $\psi$. Then, it is not difficult to check that  $\psi_\epsilon$ is an immersion which  induced metric $\Lambda_\epsilon$  and element of area $ dA_\epsilon$ are given by 
$$\Lambda_\epsilon= I -  \frac{4}{\epsilon} II + \frac{4}{\epsilon^2}III, \qquad dA_\epsilon = -\frac{1}{\epsilon^2}(\epsilon - 2 k_1)(\epsilon - 2 k_2) dA,$$ 
where $I$, $II$ and $III$ denote the first, second and third fundamental forms of $\psi$ and $dA$ is the element of area of $I$. Moreover the principal curvature functions of $\psi_\epsilon$, $ k_1^\epsilon$ and   $ k_2^\epsilon$ can be written as 
\begin{align*}
k_1^\epsilon &= \frac{\epsilon k_1}{\epsilon -  2k_1},\\
k_2^\epsilon &= \frac{\epsilon k_2}{\epsilon - 2 k_2}.
\end{align*}

Hence, the Gauss curvature, $K(\Lambda_\epsilon)$, of $\psi_\epsilon$ is given by 
\begin{equation} K(\Lambda_\epsilon) = \frac{\epsilon^2 K(I)}{(\epsilon - 2 k_1)(\epsilon - 2 k_2)} \geq 0,\label{ka}
\end{equation}
where $K(I)$ denotes the Gauss curvature of $I$ and writing  $\Lambda_\epsilon$ in an orthonormal reference of principal vector fields $\{e_1,e_2\}$, we have that 
\begin{equation} \Lambda_\epsilon \equiv \frac{1}{\epsilon^2}\left(
\begin{tabular}{cc}
$(\epsilon - 2 k_1)^2$ & 0\\
0 & $(\epsilon - 2 k_2)^2$ \end{tabular}\right).\label{pvf}\end{equation}

From \eqref{condition1}, \eqref{ka} and \eqref{pvf}, we deduce that 
$I\leq \Lambda_\epsilon$  
and   $\psi_\epsilon$  is   a complete immersion in $\R^3$ of non negative curvature. Now, by using the   Sacksteder theorem (see \cite{Sa}), either  $\psi_\epsilon(\Sigma)$  is a generalized cylinder or its Gauss curvature does not vanish   identically  and $\psi_\epsilon$ is a convex embedding satisfying one of the following items:
\begin{itemize}
\item[(A)]  $\Sigma$ is homeomorphic to a sphere, or
\item[(B)] $\Sigma$ is homeomorphic to a plane and, up to a motion in $\R^3$, there is a point $p_0\in \Sigma$ such that the plane $\{z=0\}$ is the tangent plane of $\psi_\epsilon(\Sigma)$   at $\psi_\epsilon(p_0)=(0,0,0)$ and the projection of $\psi_\epsilon(\Sigma)$ on $\{z=0\}$ is a convex domain ${\cal G}$ such that $\psi_\epsilon$ is a convex graph in the interior of ${\cal G}$, $\text{Int}({\cal G})$, and  a vertical segment at each point of ${\cal G}\setminus \text{Int}({\cal G})$. Moreover, if  $\{q_n\}$ is a divergent sequence of points in $\Sigma$, its height function $\{z(q_n)\}$ goes to infinity.
\end{itemize}

We will see that neither of these two cases can occur. In fact, the first case is not possible because any compact surface in $\R^3$ must have at least an elliptic point, which  contradicts our assumption about $\psi$.

In the second  case, it is clear that $\psi_\epsilon$ is a proper embedding. Actually, by the global convexity, there is  a cone $ {\cal V}_\epsilon  $ with axis of rotation  the $OZ$-axis such that $\psi_\epsilon (\Sigma) $ lies inside the cone ${\cal V}_\epsilon $. But $ \psi $ is obtained from $ \psi_\epsilon $ as a parallel surface to distance $ 2/\epsilon $,  thus $ \psi $  is also a proper map and  $ \psi (\Sigma) $ lies   inside the cone $ { \cal V } $ obtained as  the parallel surface  of $ { \cal  V }_\epsilon $ to a distance $2/\epsilon$.

Under these conditions we assert that $ \psi $ must have at least one elliptic point, which would also lead us to a contradiction. 
To see this, we can assume that, up to a vertical translation,  the vertex of $ { \cal V }$  is the origin and as $ \psi $ is proper and it is contained in $ { \cal V }$, the distance $ \psi (\Sigma) $ to the origin is a positive real number $ d_0 >  0 $.  Consider  the spherical cap $ \S^2_- (R, 2d_0)$ passing through the origin,  of height  $2d_0$ and  boundary the circle of radius $R$ obtained by the section  ${\cal V}\cap \{ z= 2d_0\}$. From the construction,  $\psi(\Sigma)\cap \S^2_- (R, 2d_0)=\emptyset$. Thus, fixing this  circle and  taking the spherical caps  $\S^2_-(R, 2 d_0 - t)$ of height $2d_0 - t$,  $0\leq t\le 2 d_0$ passing through $(0,0,t_0)$  with boundary the mentioned  circle, we get  a $t_0$, $d_0\leq t_0 < 2 d_0$  where $ \S ^ 2_ -(R,2d_0-t_0) $ intersects  the surface $\psi(\Sigma)$ for the  first time. It is clear that the  intersection points must be  interior and elliptic points of the surface.
 \end{proof}
 As a consequence of the previous theorem, we have the following result:
 \begin{cor}\label{cor1}
Let  $ \psi:  \Sigma \longrightarrow \R^3$ be a complete immersed surface with nonpositive Gauss curvature and  mean curvature $H$ verifying that $ |H|   \geq const  >0$. Then $ \psi (\Sigma) $ is a generalized cylinder in $ 
\R^3$.\end{cor}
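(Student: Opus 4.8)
The plan is to deduce the corollary directly from Theorem~\ref{th1}. Completeness and $K\le 0$ are assumed outright, so it suffices to produce a single principal curvature \emph{function} whose square is bounded away from zero on all of $\Sigma$. I would begin by recording the elementary relations $2H=k_1+k_2$ and $K=k_1k_2$, so that the hypothesis $|H|\ge c$ reads $|k_1+k_2|\ge 2c>0$ for some constant $c$, while the curvature assumption reads $k_1k_2\le 0$.

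Next I would observe that $\psi$ has no umbilic points: at an umbilic $k_1=k_2$, whence $K=k_1^2\ge 0$; combined with $K\le 0$ this forces $k_1=k_2=0$ and hence $H=0$, contradicting $|H|\ge c>0$. Consequently $H$ is a continuous function on the connected surface $\Sigma$ that never vanishes, and therefore has constant sign. Replacing the orientation $N$ by $-N$ if necessary — which reverses the sign of $H$ and of both principal curvatures, while leaving $K$ and the conclusion untouched — I may assume $H\ge c>0$ throughout.

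The heart of the matter is then a pointwise estimate. Ordering the principal curvatures as $k_1\le k_2$, the conditions $k_1+k_2=2H>0$ and $k_1k_2=K\le 0$ force the eigenvalues to have opposite signs (or one to vanish), so that $k_2>0\ge k_1$. Since $-k_1\ge 0$, I obtain
\[
k_2=2H-k_1\ge 2H\ge 2c>0,
\]
and hence $k_2^{\,2}\ge 4c^2>0$ everywhere on $\Sigma$. Thus the principal curvature function $k_2$ satisfies the hypothesis of Theorem~\ref{th1}, which immediately yields that $\psi(\Sigma)$ is a generalized cylinder.

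The step I expect to require the most care is not the algebra but the global consistency: Theorem~\ref{th1} demands that one and the same principal curvature function be uniformly bounded below on the entire surface, whereas the purely pointwise information $\max(k_1^2,k_2^2)\ge c^2$ would by itself permit the dominant eigenvalue to switch roles from point to point. The device that removes this difficulty is precisely the constancy of the sign of $H$ on the connected surface, which, once the orientation is fixed, singles out $k_2$ as the globally positive and uniformly large principal curvature.
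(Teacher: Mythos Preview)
Your argument is correct and follows exactly the route the paper takes: the authors' proof is a one-line remark that $|H|\ge\mathrm{const}>0$ forces one principal curvature function to satisfy $k_i^2\ge\epsilon>0$, after which Theorem~\ref{th1} applies, and you have simply unpacked this implication in detail. One small redundancy: the umbilic discussion is not needed to get the constant sign of $H$, since $|H|\ge c>0$ already says $H$ never vanishes; the rest of your estimate $k_2=2H-k_1\ge 2H\ge 2c$ (after fixing orientation and using $k_1\le 0$ from $K\le 0$) is exactly the content the paper leaves implicit.
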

\begin{proof}
 It follows  from the above theorem since the condition  imposed  on the mean curvature implies that one of the principal curvature functions $k_i$ satisfies $k_i^2 \geq \epsilon >0$ for some positive constant $\epsilon$.
 \end{proof}
\subsection{Complete special Weingarten surfaces}
Let   $ \psi : \Sigma \longrightarrow \R^3$ be an immersed surface with Gauss curvature $ K $ and mean curvature $H$.  $ \psi $ is  called a Weingarten surface if $H$ and $K$ are in a  functional relationship  $W(H,K)=0$. We say that  $\psi $ is a  {\sl special Weingarten surface} (in short, SW-surface) if there exists a ${\cal C}^1$-function  $ f: [0, \infty [ \longrightarrow \R $ such that 
\begin{align}
 H = f (H^ 2 -K),  \quad \hbox{  $f(0)\neq 0$},\label{we}
 \end{align}
 and 
\begin{equation}
4 t f '(t)^2 < 1, \qquad \forall t\in[0,\infty[. \label{ef}
\end{equation}
 A function satisfying \eqref{ef} is called an {\sl elliptic function}.
\vspace{.3\baselineskip }

 It is remarkable that for any elliptic function $f$ satisfying $f(0)\neq 0$, there is a  round sphere of radius $1/|f(0)|$  in the family of $SW$-surfaces associated to $f$.
  
  In  \cite{ST}  Sa Earp and Toubiana asked if the following result 
  \begin{quote}
  {\sc  Klotz and Osserman}\cite{KO}. A complete surface in $\R^3$ with constant mean curvature on which the Gauss curvature $K$ does not change sign is either a sphere, a minimal surface or a right circular cylinder,
  \end{quote}  can be extended to complete $SW$-surfaces. Here, we give an affirmative answer to this question and prove:
\begin{teo} Let  $ \psi:  \Sigma \longrightarrow \R^3$ be a complete $SW$-surface on which the Gauss curvature $K$ does not change sign. Then it is either a round sphere or a right circular cylinder.
\end{teo}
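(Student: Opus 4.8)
The plan is to turn the Weingarten relation \eqref{we} together with the ellipticity \eqref{ef} into a pointwise lower bound for one principal curvature, and then to split into the two sign cases of $K$, feeding the nonpositive case directly into Theorem \ref{th1}. After normalizing the orientation so that $f(0)>0$ (replacing $N$ by $-N$ turns $f$ into $-f$ and preserves \eqref{ef}), write $t=H^2-K=\left(\tfrac{k_2-k_1}{2}\right)^2\ge 0$ and order $k_1\le k_2$. Then \eqref{we} reads $H=f(t)$, so $k_1=f(t)-\sqrt t$ and $k_2=f(t)+\sqrt t$. From \eqref{ef} one has $|f'(t)|<\tfrac{1}{2\sqrt t}$ for $t>0$, hence $|f(t)-f(0)|<\sqrt t$ by integration, and therefore $k_2=f(t)+\sqrt t>f(0)>0$ on all of $\Sigma$, i.e. $k_2^2\ge f(0)^2>0$. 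The same bound shows $k_1=f(t)-\sqrt t$ is a strictly decreasing function of $t$ (its derivative $f'(t)-\tfrac{1}{2\sqrt t}$ is negative) and $k_2$ a strictly increasing one.

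If $K\le 0$, then $\Sigma$ is a complete surface of nonpositive curvature with $k_2^2\ge f(0)^2>0$, so Theorem \ref{th1} applies and $\psi(\Sigma)$ is a generalized cylinder. Along its rulings one principal curvature vanishes, and since $k_2>0$ this forces $k_1\equiv 0$, hence $K\equiv 0$ and $t=H^2$. Now \eqref{we} becomes $H=f(H^2)$, and the function $\Theta(H)=H-f(H^2)$ satisfies $\Theta'(H)=1-2Hf'(H^2)>0$ by \eqref{ef}; thus $\Theta$ is strictly increasing with a single root, so $H$ is a constant, nonzero since $f(0)\neq 0$. The cross-section of the cylinder then has constant nonzero curvature and $\psi(\Sigma)$ is a right circular cylinder.

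If $K\ge 0$, then $k_1=K/k_2\ge 0$, so $\Sigma$ is convex, and by Sacksteder's theorem \cite{Sa} it is either a generalized cylinder — made right circular exactly as above — or a convex surface. When this convex surface is compact it is a topological sphere carrying the elliptic relation \eqref{we}--\eqref{ef}, and the Hopf--Hartman--Wintner analysis of umbilic points (\cite{Ho,HW}) forces it to be totally umbilic, that is a round sphere. The remaining case is a complete noncompact convex surface that is not flat. Here I would first note that $\inf_\Sigma k_1=0$, since $\inf_\Sigma k_1>0$ would give $K=k_1k_2\ge f(0)\inf_\Sigma k_1>0$ and, by Bonnet--Myers, compactness; realizing this infimum either at an interior minimum or, since $K\ge 0$ keeps the Ricci curvature bounded below, through the Omori--Yau maximum principle, one reaches a point or sequence at which $k_1\to 0$, i.e. $K\to 0$.

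The main obstacle is precisely this last step: extracting from \eqref{we}--\eqref{ef} the quasilinear elliptic equation governing the least principal curvature $k_1$ of the Weingarten surface, and justifying the (Omori--Yau together with strong) maximum principle on the noncompact convex end, so that the vanishing of $K=k_1k_2$ at one point propagates to $K\equiv 0$ and reduces this situation to the flat, hence right circular cylinder, case already settled. By contrast, the reductions to Theorem \ref{th1} and to the classification \cite{Sa}, and the treatment of the compact case through \cite{Ho,HW}, are routine once the uniform bound $k_2\ge f(0)>0$ coming from \eqref{we}--\eqref{ef} is established.
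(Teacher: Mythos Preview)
Your treatment of the case $K\le 0$ is correct and is essentially the paper's own argument: you extract a uniform positive lower bound for $k_2$ from the ellipticity \eqref{ef} and the normalization $f(0)>0$, apply Theorem~\ref{th1} to conclude that $\psi(\Sigma)$ is a generalized cylinder, and then use the strict monotonicity of $H\mapsto H-f(H^2)$ to force $H$ constant. The only cosmetic difference is that the paper integrates \eqref{ef} from the unique root $t_0$ of $f(t)=\sqrt t$ (which exists because $K\le 0$ forces $t\ge t_0$) to obtain $k_2>2\sqrt{t_0}$, whereas you integrate from $0$ to obtain $k_2>f(0)$; either bound suffices.

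The gap is in the case $K\ge 0$. The paper does not prove this case at all: it simply invokes \cite[Theorem~5]{AEG}, where the full analysis of complete SW-surfaces with $K\ge 0$ is carried out via the Codazzi-pair machinery. Your attempted direct argument is, as you yourself flag, incomplete: the noncompact, nonflat convex branch is left hanging on an Omori--Yau/strong-maximum-principle step that you have not justified (and which, for general elliptic $f$, requires precisely the kind of structure developed in \cite{AEG}). Moreover, your citations for the compact case are off: \cite{Ho} and \cite{HW} in this paper's bibliography are Holmgren's and Hartman--Wintner's papers on surfaces of \emph{negative} curvature and asymptotic curves, not the Hopf-type index analysis of umbilics you have in mind; the relevant Hopf-type result for special Weingarten spheres is again in \cite{AEG}. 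So for $K\ge 0$ you should either cite \cite[Theorem~5]{AEG} as the paper does, or supply the missing analysis --- your sketch does not yet do the latter.
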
 
\begin{proof}
Assume the mean curvature $H$ and the Gauss curvature $K$ of the immersion $\psi$ satisfy \eqref{we} for an elliptic function
$ f: [0, \infty [ \longrightarrow \R $. Then, if we denote  by $t=H^2-K$, the principal curvatures of $\psi$ can be written as
\begin{equation}
k_1(t)=  f(t) - \sqrt{t}, \qquad k_2(t)=  f(t) + \sqrt{t},\label{pc}
\end{equation}
and  from \eqref{ef}, the following expression is satisfied
\begin{equation}
-\frac{1}{2\sqrt{t} } < f'(t) <    \frac{ 1}{2\sqrt{t} }.    \label{ec}
\end{equation}
Moreover, up to a change of orientation if necessary, we can also assume that $f(0)>0$.
\vspace{.5\baselineskip }\\
{\sc Case I: $K\leq 0$.} In this case, as $f(0)>0$ we find $t_0>0$ such that $f(t_0) =\sqrt{t_0}$ and by integration in \eqref{ec}, we obtain
\begin{equation}
-\sqrt{t} + 2 \sqrt{t_0}< f(t)<-\sqrt{t},
\end{equation} 
that is, $k_2> 2 \sqrt{t_0}$ on $\Sigma$ (see Figure \ref{et}) and from Theorem \ref{th1}, $K$ vanishes identically.
But, from \eqref{ef}, $h(t) := t - f(t^2)$ is a strictly increasing function and  it has at most one zero. Thus,  any  complete $SW$-surface with vanishing Gauss curvature has constant mean curvature, that is, it must be a right circular cylinder, which concludes the proof in this case.
  \begin{figure}[H]
\begin{center}
\includegraphics[width=0.4\linewidth]{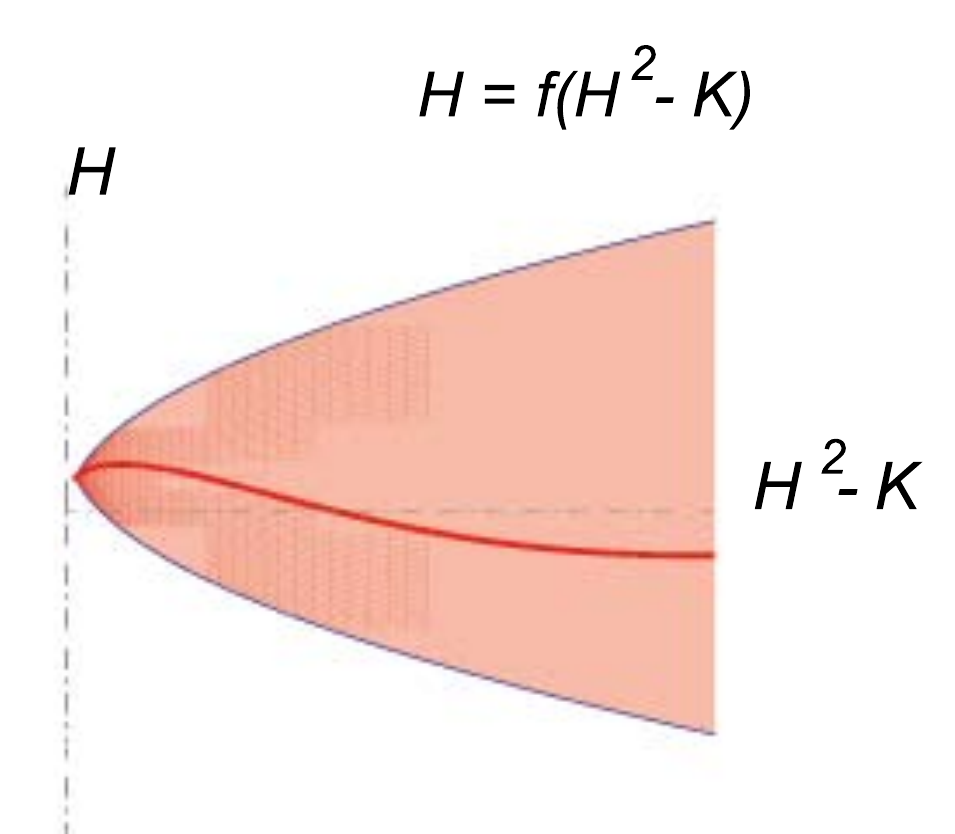}
\end{center}
\caption{SW-surface with $K\leq 0$.}
\label{et}
\end{figure}

\noindent {\sc Case II: $K\geq 0$.} This  case was proved in \cite[Theorem 5]{AEG}.
\end{proof}
\begin{remark}
{\rm  {\sc Case I} in the above result  
has also been proved in  \cite{AEG} on the additional assumption that $\psi$  is properly embedded}.
\end{remark}

%%%%%%%%%%%%%%New Section%%%%%%%%%%%%
\section{An extension of Efimov's theorem}
In this section we extend the results in \cite{E, E2} to complete surfaces with ends of negative Gauss curvature.  We will use the method developed by Efimov to prove  that in $\R^3$ it is impossible to have complete ${\cal C}^2$-immersed surfaces with Gauss curvature $K\leq c<0$. 

Throughout this section and as a standard notation, $S$ will denote a surface with a compact boundary $\partial S$ and $\Sigma$  will  denote a surface without boundary.
\begin{defi}
{\rm Assume that $\psi:S \longrightarrow \R^3$ is a ${\cal C}^2$-immersed surface with negative Gauss curvature $K<0$.  We say that the {\sl reciprocal value of the curvature of $\psi$ has variation with a linear estimate} if the following expression holds:
\begin{equation}
|\kappa(p) - \kappa(q)| \leq \epsilon_1 d_\psi(p,q) + \epsilon_2, \qquad \forall \  p,q\in S, \label{rvc}
\end{equation} 
for some non-negative constants $\epsilon_1$ and $\epsilon_2$, where
\begin{equation}\kappa = \frac{1}{\sqrt{-K}},\label{icur}
\end{equation}
and $d_\psi$ denotes the distance associated to the induced metric.}
\end{defi}
\begin{remark} \label{rk2}{\rm The above class includes the family of surfaces with Gauss  curvature  bounded above by a negative constant. In fact, if $K\leq -\epsilon <0$, then  \eqref{rvc} holds for $\epsilon_1=0$ and $\epsilon_2=2/\sqrt{\epsilon}$.}
\end{remark}
The purpose of this section is to prove the following results:
\begin{teo} \label{main}Let $S$ be a surface with a compact boundary $\partial S$ and $\psi:S \longrightarrow \R^3$ be a complete  ${\cal C}^2$-immersed surface with negative Gauss curvature. If the reciprocal value of the curvature of $\psi$ has variation with a linear estimate, then
$$\int_S|K| dA < \infty,$$
that is, $\psi$ has  finite total curvature and, in particular, $S$  is parabolic and has finite topology.
\end{teo}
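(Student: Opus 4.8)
The plan is to reduce the statement to the planar Main Lemma of Efimov (\cite[Lemma 1]{E}, \cite[Main Lemma]{KM}) by using the Gauss map as a system of coordinates. Since $K<0$ on $S$, the Gauss map $N:S\to\S^2$ is an orientation-reversing local diffeomorphism, so after composing with a stereographic chart of $\S^2$ one obtains a $C^1$ map $f$ of planar domains whose Jacobian $J$ is a negative multiple of $K$. Under this correspondence the reciprocal curvature $\kappa=1/\sqrt{-K}$ becomes, up to the bounded conformal factor of the chart, exactly the quantity $1/\sqrt{-J}$ that governs the hypotheses of the Main Lemma, and the linear-estimate condition \eqref{rvc} is precisely what feeds into it. The central quantitative output I expect to extract from the lemma is an a priori bound on the area of the image of $f$ on a disk in terms of $\epsilon_1$, $\epsilon_2$ and the radius, which does not degenerate as the disk grows out into an end.

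First I would introduce the intrinsic distance $r(p)=d_\psi(p,\partial S)$ and, using completeness together with the compactness of $\partial S$, organize $S$ into the compact piece $S_{r_0}=\{r\le r_0\}$ and the ends lying beyond it; completeness guarantees intrinsic geodesic disks of arbitrarily large radius reaching into each end. The geometric point that links the lemma to the conclusion is the area formula for the Gauss map: for any measurable $B\subset S$ one has $\int_B|K|\,dA=\int_{\S^2}\#\big(N^{-1}(\xi)\cap B\big)\,dA_{\S^2}(\xi)$, so $\int_B|K|\,dA$ is the area of the spherical image $N(B)$ counted with multiplicity. Hence any control on the image area furnished by the Main Lemma transfers directly into control on the total curvature of $B$.

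I would then run an exhaustion argument. Applying the Main Lemma to geodesic disks filling an end and combining the resulting spherical-image bound with the Gauss–Bonnet formula on the annular regions $S_r\setminus S_{r_0}$, I expect to show that $\int_{S_r}|K|\,dA$ stays bounded as $r\to\infty$, which gives $\int_S|K|\,dA<\infty$; equivalently, one argues by contradiction that infinite total curvature would force a configuration on which Efimov's lemma fails. Once finite total curvature is in hand, the two remaining assertions are classical: by Huber's theorem a complete surface of finite total curvature is finitely connected and conformally a compact surface with finitely many punctures, which yields finite topology, and since each puncture is a parabolic end the surface is parabolic.

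The hard part will be the metric bookkeeping in the reduction. The hypothesis \eqref{rvc} is phrased in the induced distance $d_\psi$, whereas the Main Lemma controls $1/\sqrt{-J}$ relative to the flat metric of the planar domain carved out by the Gauss map. Showing that these metrics are comparable along the ends, so that a linear estimate in $d_\psi$ produces the linear estimate the lemma demands, and checking that the stereographic conformal factor does not spoil this comparison, is where the genuine work lies.
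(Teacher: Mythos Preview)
Your identification of the key identity $\int_S|K|\,dA = A_{III}(S)$ (the area of $S$ in the third fundamental form, equivalently the spherical image counted with multiplicity) is correct, and this is indeed the quantity the paper bounds. But the mechanism you describe does not match either the statement of Efimov's Main Lemma or how the paper actually uses it.

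Efimov's Main Lemma (Generalized Lemma~\ref{gl1} here) is \emph{not} an a priori area bound for the image of a disk. It is a nonexistence statement: on a specific cusp-shaped planar domain $\Omega$ excluding the origin, a $C^1$ potential map satisfying \eqref{cond1}--\eqref{cond2} cannot induce a complete metric on $\Omega$. There is no ``area of the image on a disk bounded in terms of $\epsilon_1,\epsilon_2$ and the radius'' to extract, so the exhaustion-plus-Gauss--Bonnet scheme you outline has nothing to feed on. Likewise, the stereographic/conformal bookkeeping you worry about is a red herring; the paper works directly on $\S^2$ with the metric $III$ and never flattens to the plane at this stage.

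What the paper actually does is the following chain. First, Generalized Lemma~\ref{gl2} (derived from Generalized Lemma~\ref{gl1}) shows that $(S,III)$ is \emph{pseudo-convex}: the metric completion $\widetilde S$ has no boundary point at which the Gauss image contains an ``exterior rectangle''. This convexity is then exploited in two auxiliary lemmas. Lemma~\ref{l41} produces minimizing $III$-geodesics between suitable pairs of points, and Lemma~\ref{l42} uses these to prove the decisive estimate $d_N(p,\hat\partial C)<\pi$ for every $p\in S\setminus C$, where $C$ is a compact collar of $\partial S$. The proof of Theorem~\ref{main} is then one line: sweeping $S\setminus C$ by the minimizing $III$-geodesics from $\hat\partial C$, one gets
\[
A_{III}(S) \;\le\; A_{III}(C) + \pi\cdot L_{III}(\hat\partial C) \;<\;\infty,
\]
which is exactly $\int_S|K|\,dA<\infty$. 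The missing ideas in your proposal are therefore the pseudo-convexity notion, the universal $III$-distance bound $\pi$, and the geodesic-sweeping area estimate; without these the reduction to Efimov's planar lemma does not close.
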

\begin{teo} \label{ths41}Let $\psi:\Sigma \longrightarrow \R^3$ be a complete  ${\cal C}^2$-immersion with negative Gauss curvature outside a compact set $C\subset\Sigma$. If the reciprocal value of the curvature of $\psi$ has variation with a linear estimate outside $ C$, then
$$\int_\Sigma|K| dA < \infty,$$
that is, $\psi$ has  finite total curvature and, in particular, $\Sigma$ is parabolic and  has finite topology.
\end{teo}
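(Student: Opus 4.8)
The plan is to deduce Theorem \ref{ths41} directly from Theorem \ref{main} by excising a compact neighbourhood of the bad set $C$ and applying the boundary version to what remains. If $\Sigma$ is compact the statement is trivial, so I would assume $\Sigma$ non-compact and fix a point $p_0\in\Sigma$. Since $\psi$ is complete, Hopf--Rinow guarantees that the closed balls $\overline{B}_R(p_0)$ are compact; choosing $R$ large enough that $C\subset B_R(p_0)$, I set $S:=\Sigma\setminus B_R(p_0)$, a surface whose boundary $\partial S=\partial B_R(p_0)$ is compact. Picking $R$ to be a regular value of the distance function $d_\psi(p_0,\cdot)$ makes $\partial S$ a genuine compact $1$-manifold; this, together with completeness of the restricted immersion, is the only bit of bookkeeping that needs attention.

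First I would verify that $\psi|_S$ satisfies every hypothesis of Theorem \ref{main}. Completeness is inherited: $S$ is a closed subset of the complete space $\Sigma$, and its intrinsic distance $d_S$ dominates the ambient distance $d_\psi$, so any $d_S$-Cauchy sequence is $d_\psi$-Cauchy and converges to a limit which, $S$ being closed, lies in $S$. The Gauss curvature is negative on $S$ because $S\cap C=\emptyset$. The one point that genuinely deserves care is the linear estimate: it is assumed in the form \eqref{rvc} with the distance $d_\psi$ of the whole $\Sigma$, whereas Theorem \ref{main} demands it for the intrinsic distance of $S$. This works in our favour, since any path contained in $S$ is also a path in $\Sigma$, whence $d_\psi(p,q)\le d_S(p,q)$ for all $p,q\in S$, and therefore $|\kappa(p)-\kappa(q)|\le \epsilon_1 d_\psi(p,q)+\epsilon_2\le \epsilon_1 d_S(p,q)+\epsilon_2$. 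Thus the reciprocal curvature of $\psi|_S$ has variation with a linear estimate, with the same constants $\epsilon_1,\epsilon_2$.

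Applying Theorem \ref{main} to $\psi|_S$ (component by component, should $S$ be disconnected) yields $\int_S|K|\,dA<\infty$ together with the parabolicity and finite topology of $S$. Since $\overline{B}_R(p_0)$ is compact and $K$ is continuous, $\int_{\overline{B}_R(p_0)}|K|\,dA<\infty$, so that $\int_\Sigma|K|\,dA<\infty$. Finally $\Sigma$ is obtained by attaching the compact piece $\overline{B}_R(p_0)$ to $S$ along the compact curve $\partial S$, hence it has finite topology; and because $\Sigma$ and $S$ differ by a compact set they share the same ends, so the parabolicity of $S$ passes to $\Sigma$. I expect no new analytic obstacle here: all the difficulty is already absorbed into Theorem \ref{main}, and the remaining work is the verification that the excised surface $S$ is complete with compact boundary and that the linear estimate survives the restriction—the latter being the only step where one must check that the inequality between $d_\psi$ and $d_S$ points in the right direction.
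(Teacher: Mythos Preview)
Your proposal is correct and follows precisely the approach the paper intends: the paper's own proof of Theorem~\ref{ths41} is the single sentence ``It follows directly from Theorem~\ref{main},'' and what you have written is a careful unpacking of that sentence---excise a compact piece containing $C$, check that the resulting surface with compact boundary satisfies the hypotheses of Theorem~\ref{main} (in particular that $d_\psi\le d_S$ makes the linear estimate transfer in the right direction), and add back the finite contribution of the compact piece. There is nothing to correct.
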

\begin{teo} \label{ths42}Let  $\psi:\Sigma \longrightarrow \R^3$ be a complete ${\cal C}^2$-immersed surface with Gauss curvature $K\leq -\epsilon<0$ outside a compact subset $C$ of $\Sigma$. Then  $\Sigma$ has finite topology, $\psi$ is properly immersed and has finite area. Moreover, any end of $\psi$ is asymptotic to a half-line in $\R^3$ (see Figures \ref{f1a} and  \ref{f1b}).
\end{teo}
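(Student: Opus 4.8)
The plan is to obtain the first two conclusions as quick consequences of the earlier results, and then to spend the real effort on the asymptotic statement. Since $K\le-\epsilon<0$ outside the compact set $C$, Remark \ref{rk2} guarantees that the reciprocal value of the curvature has variation with a linear estimate \eqref{rvc} on $\Sigma\setminus C$ (with $\epsilon_1=0$ and $\epsilon_2=2/\sqrt{\epsilon}$). Hence Theorem \ref{ths41} applies verbatim and yields $\int_\Sigma|K|\,dA<\infty$, finite topology, and parabolicity. The finite-area statement is then immediate: on $\Sigma\setminus C$ we have $|K|\ge\epsilon$, so $\mathrm{Area}(\Sigma\setminus C)\le\frac1\epsilon\int_\Sigma|K|\,dA<\infty$, while $\mathrm{Area}(C)<\infty$ because $C$ is compact and $\psi$ is a $\mathcal C^2$-immersion.

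For the remaining assertions I would reduce to a single end. Finite topology lets me write $\Sigma=\Sigma_0\cup E_1\cup\dots\cup E_m$, with $\Sigma_0$ compact containing $C$ and each $E_j$ an annular end carrying finite area, finite total curvature, and $K\le-\epsilon$. The first step is to show that each $E_j$ is a metric \emph{cusp}. Writing the end in Fermi coordinates $dt^2+f(t,\theta)^2\,d\theta^2$ based on its boundary circle, the width $L(t)=\int f(t,\theta)\,d\theta$ of the distance circles satisfies the second-variation inequality $L''(t)\ge\epsilon\,L(t)$ forced by $K\le-\epsilon$. Thus $L$ is convex, hence eventually monotone with exponential rates $\pm\sqrt{\epsilon}$, and the finiteness of the area $\int L(t)\,dt$ selects the decaying branch, giving $L(t)\lesssim e^{-\sqrt{\epsilon}\,t}\to0$. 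So each end narrows exponentially; in particular a flaring (positive-rate) end is excluded precisely by finiteness of the area.

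Next I would pass to the extrinsic picture. Because $\psi$ is distance non-increasing from $(\Sigma,d_\psi)$ to $\R^3$, the exponential narrowing gives $\mathrm{diam}_{\R^3}\psi(c_t)\le L(t)\to0$, so the distance circles collapse and any choice of points $\gamma(t)\in c_t$ defines a core curve, unambiguous up to an error $\le L(t)$. Taking $\gamma$ to be an axial unit-speed geodesic ray of the cusp, the induced-metric identity gives $|\gamma'(t)|_{\R^3}=1$, and since $\gamma$ is a geodesic its acceleration is purely normal, $\gamma''=II(\gamma',\gamma')\,N$. Properness and the half-line statement both follow once I control the total turning $\int_0^\infty|II(\gamma',\gamma')|\,dt=\int_0^\infty|k_n(t)|\,dt$: if this is finite then $\gamma'(t)$ converges to a unit vector $v_j$, whence $\gamma(t)=\gamma(0)+\int_0^t\gamma'\to\infty$ along $v_j$, the whole end (a tube of radius $\le L(t)$ around $\gamma$) goes to infinity, so $\psi|_{E_j}$ is proper, and $E_j$ is asymptotic to the half-line issuing in the direction $v_j$.

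The heart of the argument—and the step I expect to be the main obstacle—is the estimate of this turning, where the hypothesis $K=k_1k_2\le-\epsilon$ is used decisively. On the collapsing tube the \emph{circular} principal curvature must blow up like $1/L(t)$, so the companion principal curvature is forced to be small, of size $|K|/|k_{\mathrm{circ}}|\lesssim|K|\,L(t)$; since the axial geodesic stays nearly aligned with the small-curvature direction, the normal curvature obeys $|k_n(t)|\lesssim|K|\,L(t)$, and therefore $\int_0^\infty|k_n|\,dt\lesssim\int_0^\infty\!\Big(\int_{c_t}|K|\,ds\Big)dt=\int_{E_j}|K|\,dA<\infty$. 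Making this chain rigorous requires quantitative control of two delicate points: that the circular principal curvature genuinely dominates (is of order $1/L$) on the exponentially thin tube, and that the axial geodesic does not drift away from the small-curvature principal direction, so that $k_n$ is comparable to the small principal curvature uniformly in $t$. Once these are established, finiteness of the total curvature bounds the accumulated rotation, the core straightens to a half-line, and all the assertions of the theorem follow.
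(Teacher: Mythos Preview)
Your derivation of finite total curvature, finite topology, and finite area is correct and matches the paper exactly. The divergence is entirely in the treatment of the ends, and here the two ``delicate points'' you flag are not just technicalities but genuine gaps that your outline gives no mechanism to close.

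The assertion that on the thin tube the ``circular'' principal curvature is of order $1/L(t)$ does not follow from $K\le-\epsilon$ and the intrinsic narrowing. What Fenchel plus Gauss--Bonnet gives is only an \emph{integral} statement: $\int_{c_t}|II(c_t',c_t')|\,ds\gtrsim 2\pi$ eventually, i.e.\ the \emph{average} normal curvature along $c_t$ is large. Pointwise, nothing prevents the principal directions from rotating wildly around the tube, so that at the point where your axial geodesic $\gamma$ crosses $c_t$ the large principal curvature could be aligned with $\gamma'$ rather than with $c_t'$. Your second gap (that $\gamma'$ stays close to the small-curvature eigendirection) is exactly the same issue seen from the other side, and you have proposed no tool to control it. Without this, the chain $|k_n|\lesssim |K|\,L(t)$ breaks and the finiteness of $\int|k_n|\,dt$ is unproven; consequently neither properness nor the half-line asymptotics follows.

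The paper sidesteps all of this by never looking at the second fundamental form along a geodesic. It uses two extrinsic facts: Osserman's convex hull property (a compact piece of $\psi(\Sigma)$ with $K\le0$ lies in the convex hull of its boundary), and Burago's theorem that such an end is unbounded in $\R^3$. From finite area one extracts a sequence of level curves $\Gamma_n$ with $l(\Gamma_n)\to0$; the convex hull property traps each annular piece $\mathcal A_n^m$ inside $conv(\Gamma_n\cup\Gamma_m)$; unboundedness forces the $\Gamma_n$ to march off to infinity, and a short compactness argument produces a limiting direction $v_0$ and a nested family of solid cylinders $\mathcal R_n^+$ over $conv(\Gamma_n)$ with $\mathcal A_n^\infty\subseteq\mathcal R_n^+$. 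Since $l(\Gamma_n)\to0$, these cylinders shrink to a single half-line. This gives properness and the asymptotic statement simultaneously, with no estimate on $II$ beyond $K\le0$.
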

\begin{figure}[H]
\center{\includegraphics[width=0.4\linewidth]{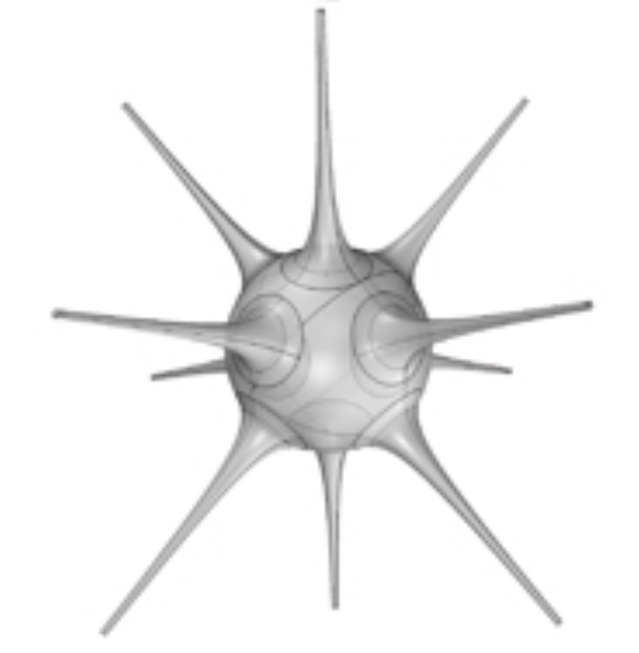}\hspace{1cm}
\includegraphics[width=0.4\linewidth]{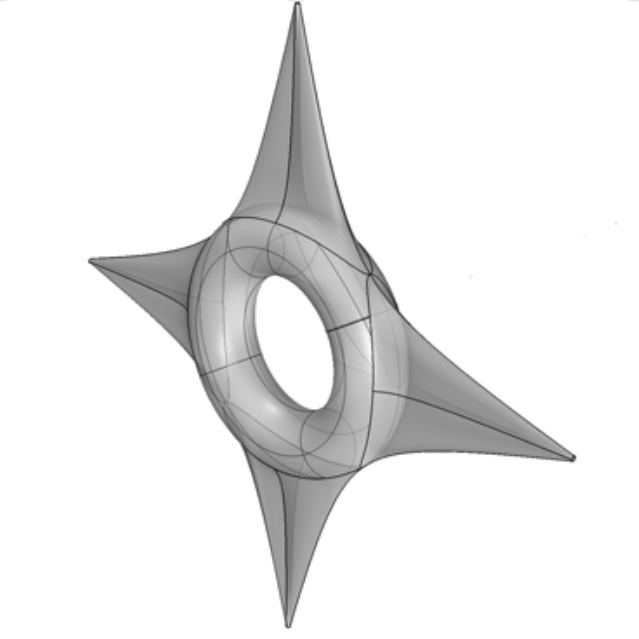}}
\caption{ Complete surfaces with negatively curved ends.}
\label{f1b}
\end{figure}
\subsection{The generalized lemmas}
The proof of Efimov's theorem and its extension (see \cite{E,E2}), are derived from a  result which concerns  special  ${\cal C}^1$-immersions $F:{\cal D} \longrightarrow \R^2$ where ${\cal D}\subset \R^2$ is constructed in \cite[Section 2.1]{KM} as any open simply-connected region containing $\Omega$, 
$$ \Omega =\{ (x,y)\in \R^2 | \ 0<x^2+y^2\leq \epsilon^2, \ \ y^2 \geq - c x\}, \qquad \epsilon,\  c >0,$$
and excluding the origin, (see Figure \ref{fdominio}).
\begin{figure}[H]
\center{\includegraphics[width=0.3\linewidth]{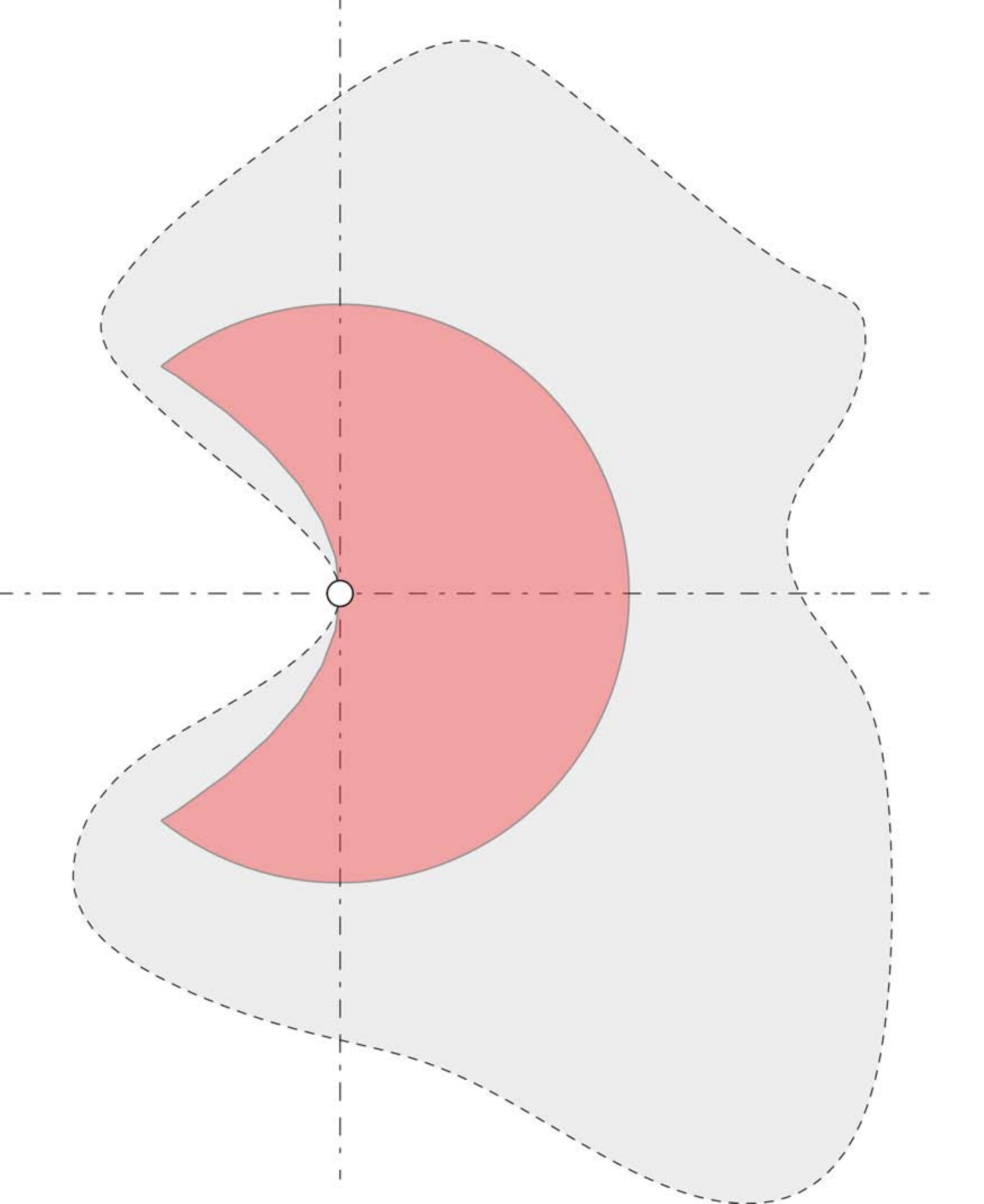}}
\caption{ Open simply-connected region containing $\Omega$.}
\label{fdominio}
\end{figure}
On  such a region ${\cal D}$ we can consider the   Riemannian metric $g^\star$ induced by $F$ (that is, the $g^\star$-length of an arc $\Gamma$ in ${\cal D}$ is just the Euclidean length of $F\circ \Gamma$) and its induced distance $d^\star$. On $\Omega$,  the following distance $d^\star_\Omega$ can also be considered:  $d_\Omega^\star(p,q)$ is defined as the infimum of the $g^\star$-lengths of rectifiable curves in $\Omega$ joining $p$ to $q$ for any $p,q\in \Omega$. It is clear that 
$$ d^\star(p,q) \leq d_\Omega^\star(p,q), \qquad \forall p,q\in \Omega.$$

Although in \cite{E2} Efimov proved a more general  version of the following  Generalized lemma \ref{gl1}, we will formulate the result in a way that it can  be used to prove the Generalized lemma \ref{gl2}. 
\begin{glemma} \label{gl1} Let  $F:{\cal D} \longrightarrow \R^2$   be a potential  given by  
$F=\nabla_0 f$, where   $f:{\cal D} \longrightarrow \R$ is a  ${\cal C}^2$-function  satisfying
\begin{align}
Det\left(\nabla_0^2f\right) &\leq -\frac{1}{g^2} < 0\label{cond1},\\
\left| g(p)-g(q) \right| & \leq \epsilon_1 d(p,q) + \epsilon_2, \qquad \forall p,q\in {\cal D},\label{cond2}
\end{align}
for some positive function  $g:{\cal D} \longrightarrow \R^+$ and $\nabla_0$  is the usual Euclidean gradient. Then the metric space $(\Omega,d^\star_\Omega)$ cannot be complete.
\end{glemma}
For a detailed  discussion and proof  of the  above Generalized lemma \ref{gl1} the reader is referred to  \cite[Generalized lemma A]{E2} and \cite[Main Lemma]{KM}.
\vspace{.3\baselineskip }

Now, before formulating   the Generalized lemma \ref{gl2}, we will  clarify some notation and terminology following the same approach as in \cite{KM}.

As we said at the beginning of this Section   by  $S$ we shall denote  a surface with a compact boundary $\partial S$ and $\psi:S\longrightarrow \R^3$ will be  a complete  ${\cal C}^2$-immersed surface  with negative Gauss curvature, $K<0$. It is not a restriction to assume that $S$ is orientable (in other case, we would work with its two-fold orientable covering).

Let $N:S \longrightarrow \S^2$ be the Gauss map of $\psi$ and  consider $III$  the third fundamental form of $\psi$, that is, the ${\cal C}^0$-metric induced by $N$. By $d_N$ we shall denote the distance associated to $III$.

Let  $(\widetilde{S}, III)$ be the completion of 
$(S, III)$ as a metric space and denote by  $\partial\widetilde{S}= \widetilde{S}\setminus S$ the  {\sl  boundary set of $\widetilde{S}$} and by $\widetilde{N}: \widetilde{S}  \longrightarrow \S^2$ the continuous extension of  $N$ to $\widetilde{S}$. As in \cite{KM}, we can also introduce the following concepts:
\begin{defi} {\rm Let $\Gamma$ be a non geodesic open circular arc on $\S^2$, $p\in \Gamma$ and $\epsilon >0$.  We consider  from each point of $\Gamma$ the open geodesic segment of $\S^2$ of length $\epsilon$ perpendicular to $\Gamma$ and directed on the side of concavity of $\Gamma$.  The region $R(\Gamma,\epsilon)$ formed by the union of $\Gamma$ and  all such segments is called an {\sl exterior rectangle} of $\Gamma$ at $p$.
 (See Figure \ref{f3}).
\begin{figure}[H]
\center{\includegraphics[width=0.4\linewidth]{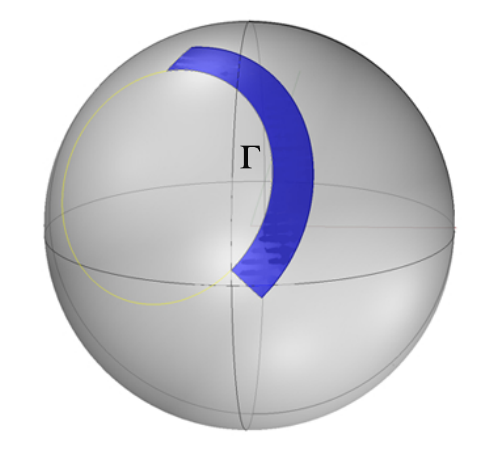}}
\caption{Exterior rectangle.}
\label{f3}
\end{figure}
We call  $(\widetilde{S}, III)$  {\sl concave at a point}  $p\in \partial\widetilde{S}$ if $p$ is in the closure  $\widetilde{U}$  (in $\widetilde{S}$) of an open region  $U \subset S\setminus\partial S$ such that $\widetilde{N}$ is one-to-one on $\widetilde{U}$ and $N(U)$ contains the interior of an exterior rectangle at $\widetilde{N}(p)$.
\vspace{.3\baselineskip }

If $(\widetilde{S}, III)$ is not concave at any point of $\partial\widetilde{S}$  we call  $(S,III)$  {\sl pseudo convex}. 
}\end{defi}
\begin{glemma}\label{gl2} Let $\psi:S \longrightarrow \R^3$ be  a complete  ${\cal C}^2$-immersion with negative Gauss curvature, $K<0$. If the reciprocal value of the curvature of $\psi$ has variation with a linear estimate, then $(S,III)$ is pseudo convex.
\end{glemma}
This result follows directly from the arguments of Efimov because the existence of a compact boundary, although it is  not considered by him, in no way alters his proof. Indeed, it follows  by taking into account  the discussion about  Lemma B (altered) in \cite[Subsection 22]{E2} and observing  that, even when the surface has a compact boundary,   it is possible to apply the Generalized lemma \ref{gl1} and complete the proof as in \cite[Subsection 35]{E}.

\subsection{Two auxiliary results}
In this subsection  we  discuss two results used  in the proof of  Theorem \ref{main}. Although the same notations and terminology as in the previous section is used,  we need to clarify some standard definitions and introduce further terminology.

Throughout,   we shall consider on  $S$ the Riemannian structure induced by the third fundamental form  $III$.  We shall choose a compact subset $C$ of $S$ such that $\partial S \subset \text{\rm Int}(C)$, by $\hat{\partial} C$ will denote the boundary set of $ S\setminus \text{\rm Int}(C)$ which is  given  by finitely many closed  ${\cal C}^1$-curves  and  $d_{N}$ will be  the distance associated to  $III$.
\begin{defi}
{\rm If, locally,  a parametrized arc on $S$ is a shortest path between any two of its points,  it is  called a {\sl geodesic  arc}. 
\vspace{.3\baselineskip }

As $N$ is a ${\cal C}^1$- isometric immersion, we can  check that any geodesic arc in $S$ of length less than $\pi$ is minimizing (i.e.,  it  is the  shortest path between any two of its points) and it is mapped by $N$ one-to-one onto a path of equal length along a great circle on $\S^2$.

If $p\in S\setminus \partial S$ and $\epsilon>0$, we shall denote by $D_\epsilon(p)$ the geodesic disc of radius $\epsilon$ in $S$, that is,
$$D_\epsilon(p) = \{ q\in S\setminus \partial S\ : \ d_N(p,q) < \epsilon\}.$$

$D_\epsilon(p)$ is called a {\sl full} geodesic disc if one can leave $p$ along a (half open) geodesic ray of length $\epsilon$ in every direction.

By a  {\sl convex} subset ${\cal H}$ in $S$ (or in $\S^2$) we understand  a non empty subset which satisfies  that any two of its points can  be joined by a unique minimizing geodesic  arc within  ${\cal H}$ (observe that with this definition $\S^2$ is not convex).
}\end{defi}

\begin{lem}  \label{l41}Consider  a complete  ${\cal C}^2$-immersion   $\psi:S \longrightarrow \R^3$ with negative Gauss curvature, $K<0$,  $r$   a positive real number such that $3r = d_{N}(\partial S, \hat{\partial} C)$ and  $p,q\in S\setminus  \text{\rm Int}(C)$  two points   satisfying 
\begin{equation} d_{N}(p,q) < \text{\rm min}\{ \text{\rm max$\{d_{N}(p,\hat{\partial} C), d_{N}(q,\hat{\partial} C)\}$}+ r, \pi\}.\label{dpq}
\end{equation}
If the reciprocal value of the curvature of $\psi$ has variation with a linear estimate, then there is a unique geodesic arc $\gamma$  from $p$ to $q$.
\end{lem}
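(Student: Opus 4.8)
The plan is to prove existence and uniqueness separately, working throughout with the metric $III$ (the third fundamental form), for which $N:S\to\S^2$ is a local isometry. The essential geometric fact I would exploit is that, since $N$ maps short geodesics isometrically onto great-circle arcs of equal length, any geodesic arc in $(S,III)$ of length less than $\pi$ is minimizing and embeds into $\S^2$; this is exactly the bound built into \eqref{dpq}. So the length restriction $d_N(p,q)<\pi$ guarantees we stay below the injectivity radius of $\S^2$, where geodesics are unique and minimizing. The role of the compact-boundary and the quantity $r$ with $3r=d_N(\partial S,\hat\partial C)$ is to ensure that the minimizing path we construct stays inside $S\setminus\partial S$ and does not run into $\partial S$ or wander outside the region where the hypotheses hold.

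For \emph{existence}, I would argue that the set $S\setminus\text{Int}(C)$, together with the completeness of $\psi$ in the induced metric $I$ (and hence the completeness of $(S,III)$ away from where $K\to 0$, guaranteed because $K<0$ so $III$ is a genuine metric), allows one to realize $d_N(p,q)$ by a rectifiable curve and extract a minimizing geodesic via Arzel\`a–Ascoli. The key point is to verify that this minimizer avoids $\partial S$: if a minimizing path from $p$ to $q$ touched $\partial S$, its length would be at least $2\,d_N(\partial S,\hat\partial C)/3\cdot(\text{something})$ — more precisely, one uses the triangle inequality together with $d_N(p,\hat\partial C),d_N(q,\hat\partial C)\ge 0$ and the estimate \eqref{dpq} to show the path cannot dip below $\hat\partial C$ into $C$, because doing so would force its length to exceed the right-hand side of \eqref{dpq}. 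The condition \eqref{dpq}, with its $\max\{\cdots\}+r$ term and the choice $3r=d_N(\partial S,\hat\partial C)$, is precisely calibrated so that a geodesic leaving $p$ (or $q$) toward the other point cannot reach $\partial S$ within the allotted length budget.

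For \emph{uniqueness}, I would use the local isometry $N$ together with the length bound $<\pi$. Suppose there were two distinct geodesic arcs $\gamma_1,\gamma_2$ from $p$ to $q$. Both have length $<\pi$, so each is minimizing and is carried by $N$ onto a minimizing great-circle arc on $\S^2$ between $N(p)$ and $N(q)$. Since two points on $\S^2$ at distance $<\pi$ are joined by a \emph{unique} minimizing great-circle arc, the images $N\circ\gamma_1$ and $N\circ\gamma_2$ coincide. Then I would lift this uniqueness back to $S$: because $N$ is a local isometry and the common image has length $<\pi$, any two lifts starting at the same point $p$ and following the same great-circle arc must agree (one can develop the arc and use that $III$ is non-degenerate where $K<0$, so the lift is determined by its starting point and initial direction). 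Here the linear-estimate hypothesis \eqref{rvc} enters to control $\kappa=1/\sqrt{-K}$ and thereby rule out degeneration of the metric $III$ along the arc, keeping $N$ a genuine local diffeomorphism on a neighborhood of the path.

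The main obstacle I anticipate is the \emph{existence-plus-confinement} step rather than uniqueness: one must rule out that the minimizing competitor escapes through $\partial S$ or crosses into $\text{Int}(C)$, where neither the curvature sign nor the linear estimate \eqref{rvc} is assumed. This is delicate because $S$ is incomplete as one approaches $\partial S$, and a naive minimizing sequence might concentrate mass near the boundary. The careful bookkeeping with the three scales $d_N(p,\hat\partial C)$, $d_N(q,\hat\partial C)$, and $r=\tfrac13 d_N(\partial S,\hat\partial C)$ in \eqref{dpq} is exactly what prevents this, and I expect the proof to hinge on a clean triangle-inequality argument showing that any path reaching $\hat\partial C$ (and a fortiori $\partial S$) is strictly longer than $d_N(p,q)$, so the minimizer stays in the interior region $S\setminus\text{Int}(C)$ where Generalized Lemma \ref{gl2} and the local-isometry property of $N$ apply.
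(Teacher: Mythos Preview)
Your proposal has a genuine gap in the existence step. You assume, implicitly, that $(S,III)$ is complete (``completeness of $(S,III)$ away from where $K\to 0$'') so that a minimizing sequence has a limit by Arzel\`a--Ascoli. But completeness of $\psi$ is in the metric $I$, not $III$; nothing prevents $K$ from tending to $0$ at infinity, and the paper explicitly works with the metric completion $(\widetilde S,III)$ and its boundary set $\partial\widetilde S=\widetilde S\setminus S$, which may be nonempty. A minimizing sequence of arcs can accumulate at a point of $\partial\widetilde S$, and your triangle-inequality bookkeeping with $\partial S$, $\hat\partial C$, and $r$ does nothing to exclude this: those are distances to the \emph{topological} boundary, not to the metric boundary $\partial\widetilde S$ of the incomplete space $(S,III)$.

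This is precisely where the linear-estimate hypothesis and Generalized Lemma~\ref{gl2} enter, and your sketch does not use them in any substantive way. The paper's proof is not a direct extraction of a minimizer; it is an inductive construction. One first proves an auxiliary assertion: given a geodesic arc $\gamma$ of length $<\pi-2\epsilon$ between $p$ and $q$, the pseudo-convexity of $(S,III)$ (Generalized Lemma~\ref{gl2}) lets one enlarge $\gamma$ to an open \emph{convex} set $\mathcal H\subset S\setminus\partial S$ on which $N$ is one-to-one. Then one takes any arc $\Gamma$ from $p$ to $q$ of length $<\pi$, subdivides it as $p=x_0,\dots,x_n=q$ with $d_N(x_k,x_{k+1})<\epsilon$, and inductively upgrades the geodesic $\gamma_{k-1}$ from $p$ to $x_{k-1}$: since $x_k\in D_\epsilon(x_{k-1})\subset\mathcal H_{k-1}$ and $\mathcal H_{k-1}$ is convex, the minimizing geodesic from $p$ to $x_k$ exists inside $\mathcal H_{k-1}$. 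The convex-set mechanism is what replaces the missing completeness of $III$; without it the existence argument collapses. Your uniqueness idea (pushing forward by $N$ to the unique short arc on $\S^2$ and lifting back) is essentially sound, but in the paper it comes for free once the geodesic is produced inside a convex set on which $N$ is injective.
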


 First, we remark the following assertion holds: 
 \begin{asser}
\label{asser1}{\rm Under the  hypotheses of  Lemma \ref{l41}, if  $\gamma$ is  a geodesic arc from $p$ to $q$ and  $D_\epsilon(p)$ and $D_\epsilon(q)$ are two full geodesic discs in $S\setminus\partial S$ satisfying  $l(\gamma)
 + 2 \epsilon<\pi$ with $2\epsilon < r$, then there is an open convex subset ${\cal H}$ in $S\setminus\partial S$ containing $D_\epsilon(p)\cup \gamma \cup D_\epsilon(q)$.}\end{asser}
 \begin{proof1}
Assume that $d_{N}(p,\hat{\partial} C)\geq d_{N}(q,\hat{\partial} C)$, then from \eqref{dpq},  $D_\epsilon(p)\cup \gamma \cup D_\epsilon(q)$ lies on the geodesic disc $\D=D_{d_N(p,q)+2\epsilon}(p)$ which is contained   in  $S \setminus\partial S$. Moreover,  the convex subset  ${\cal H}$ can be  constructed within  $\D$ using  the same arguments as in \cite[{\sc Observation 4}, item (B)]{KM} and applying our generalized lemma \ref{gl2} (instead of Lemma A of \cite{KM}). We detail here, for the reader's benefit, how this construction  can be done.
\vspace{.5\baselineskip }\\
{\sc Case I.} First, we make the construction of ${\cal H}$  under the additional assumption that the closures of $D_\epsilon(p)$ and $D_\epsilon(q)$ in $\widetilde{S}$ lie within $S\setminus \partial S$.

Let ${\cal T}(\tau)$ be the closed tubular neighborhood of $\gamma$ of radius $\tau$ inside $\D$. As $N$ is a local diffeomorphism, it is clear  there exists $\tau>0$ such that $N$ is one-to-one on 
$$\overline{D}_\epsilon(p)\cup {\cal T}(\tau)\cup \overline{D}_\epsilon(q),$$ where by bar we denote the corresponding closure in $S$. Consider 
$$ \hat{\tau}= \sup \{ \tau \in ]0,\epsilon[ \ :   \text{ N is one-to-one on} \ \overline{D}_\epsilon(p)\cup {\cal T}(\tau)\cup \overline{D}_\epsilon(q)\},$$
then it is easy to see  that $\hat{\tau} = \epsilon$, otherwise we find somewhere on the metric closure of ${\cal T}(\hat{\tau})$ a point $\tilde{p}\in \partial \widetilde{S}$ such that  $N(\tilde{p})$ lies on a non geodesics circle $\Gamma$ on the boundary of $N({\cal T}(\hat{\tau}))$ parallel to $N\circ\gamma$  with its center on the opposite side of $\Gamma$ from $N({\cal T}(\hat{\tau}))$. But then  $\widetilde{S}$ is  concave at $\tilde{p}$ which gives a contradiction with the Generalized Lemma \ref{gl2}.

Now, we will consider that $N\circ \gamma$ parametrizes some portion of the equator $\{y=0\}$ on $\S^2$ with its midpoint at $(0,-1,0)$ and such that a certain $y_0<0$ is the $y$ coordinate at the points $N(p)$ and $N(q)$.   We will denote by ${\cal R}$  the right elliptical cylinder in $\R^3$ formed by the union of all lines parallel to the $x$-axis through the boundaries of $D_\epsilon(N(p))$ and  $D_\epsilon(N(q))$. Let $\Pi_\tau^+$ and $\Pi_\tau^-$ be the planes in $\R^3$ making an angle $\tau$, $\tau\in [0,\pi/2]$,  with $z=0$  and tangent to ${\cal R}$ along a line on which $y=const \geq y_0$ with $z= const \geq 0$ and $z=const\leq 0$, respectively,  see Figure  \ref{convex}(a). 

It is clear there exists a unique $\tau_0\in ]0,\pi/2[$ for which $\Pi_{\tau_0}^+$ and $\Pi_{\tau_0}^-$ pass through the origin in $\R^3$ and cut $\S^2$ along great circles tangent to the circular boundaries of $D_\epsilon(N(p))$ and  $D_\epsilon(N(q))$.

On $\S^2$ we take  the neighborhood ${\cal E}_\tau$ of $N\circ \gamma$ formed by the open region in $y<0$ lying below $\Pi_\tau^+$, above $\Pi_\tau^-$, to the  right of  $D_\epsilon(N(p))$  and to the left  of  $D_\epsilon(N(q)))$, see the blue region in Figure  \ref{convex}(b).

From the first part of the construction, $N$ is one-to-one on $D_\epsilon(p)\cup {\cal T}(\epsilon)\cup D_\epsilon(q)$ and $N$ maps $D_\epsilon(p)\cup {\cal T}(\epsilon)\cup D_\epsilon(q)$ onto ${\cal E}_0$.  Let $\hat{\tau}$ be  the supremum  of all $\tau$ values  in $[0,\tau_0]$ for which some neighborhood ${\cal N}_\tau$ of $\gamma$ within $\D$ is mapped  by $N$ one-to-one onto ${\cal E}_\tau$,  then we can prove that $\hat{\tau}=\tau_0$.
 Otherwise we find a point $\tilde{p}\in \partial \widetilde{S}$ on the metric closure  of ${\cal N}_{\hat{\tau}}$. But, under our assumption that the closures of $D_\epsilon(p)$ and $D_\epsilon(q)$ in $\widetilde{S}$ lie within $S\setminus \partial S$, $N(\tilde{p})$ is to distance greater than $\epsilon$ from $N(p)$ and $N(q)$. Thus, $N(\tilde{p})$ lies along the $\Gamma = \Pi_{\hat{\tau}}^+\cap \S^2$ or  $\Gamma = \Pi_{\hat{\tau}}^-\cap \S^2$  which centers lie on the opposite  side of $\Gamma$ from ${\cal E}_{\hat{\tau}}$ and $\widetilde{S}$ is concave at $\tilde{p}$ which  contradicts the Generalized lemma \ref{gl2}.

By taking ${\cal H}={\cal N}_{\tau_0}$, it is clear that  ${\cal H}$ is a convex subset within $\D$ and $N: {\cal H}\rightarrow {\cal E}_{\tau_0}$ is one-to-one.
\vspace{.5\baselineskip }\\
{\sc Case II.} In the  general case,  we can apply  for any $\tau\in ]0,\epsilon[$  the Case I to $D_\tau(p)\cup \gamma \cup D_{\tau}(q)$ to get an open convex subset ${\cal H}_\tau$ within $\D$ and containing $D_\tau(p)\cup \gamma \cup D_{\tau}(q)$. Then ${\cal H}$ can be constructed by taking 
$$ {\cal H} = \bigcup_{0<\tau<\epsilon}{\cal H}_\tau.$$
\end{proof1}
 \begin{figure}[H]
\center{ \includegraphics[width=0.39\linewidth]{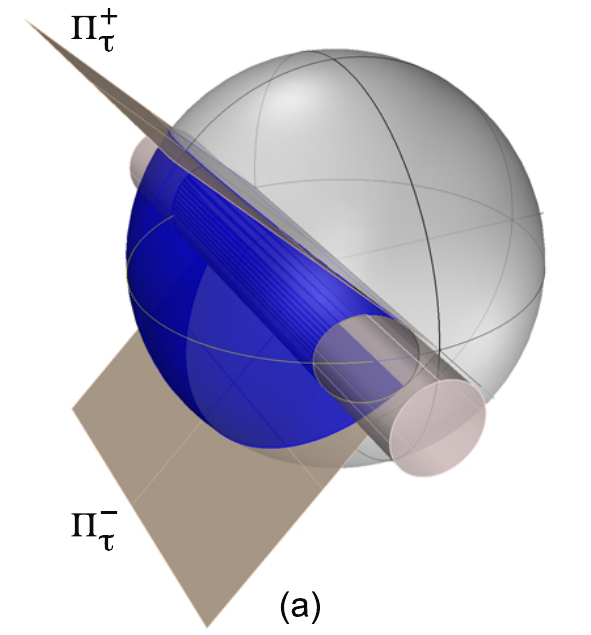}\hspace{1cm}
\includegraphics[width=0.4\linewidth]{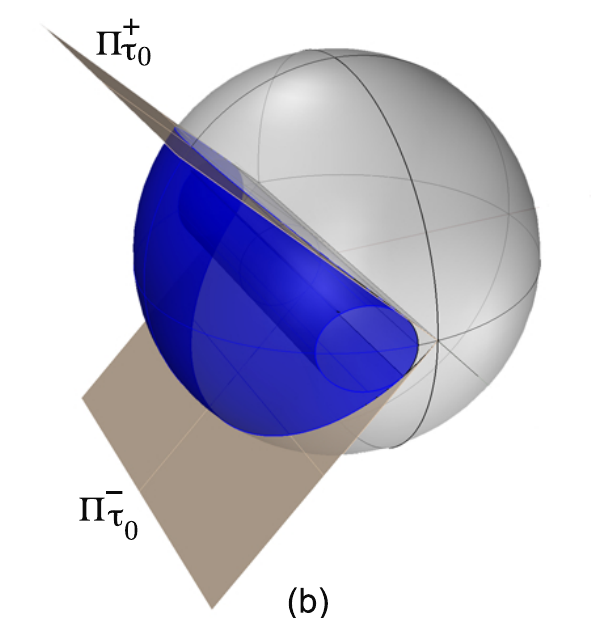}}
\caption{Construction of the convex subset $N({\cal H})$.}
\label{convex}
\end{figure}
\begin{proof2}
To prove the Lemma we suppose that $p\neq q$, otherwise it is trivial. Then,  from \eqref{dpq} and  since $S\setminus\partial S$ is connected,  we can take a parametrized arc $\Gamma$ satisfying 
$$ l_N(\Gamma) <\text{\rm min}\{ \text{\rm max$\{d_{N}(p,\hat{\partial} C), d_{N}(q,\hat{\partial} C)\}$}+ r, \pi\},$$
 where $l_N$ denotes  the length induced by the third fundamental form $III$. 
 
 But $N$ is a local isometry and $\Gamma$ is compact, thus  there is  a positive  real number $\epsilon$,  $0< \epsilon <r$ such that,
 \begin{equation}
  l_N(\Gamma) + 2\epsilon <\text{\rm min}\{ \text{\rm max$\{d_{N}(p,\hat{\partial} C), d_{N}(q,\hat{\partial} C)\}$}+ r, \pi\},\label{lgg}
\end{equation}
 and $D_\epsilon(x)$ is a full geodesic disc in $S \setminus\partial S$ for any point $x$ of $\Gamma$. 
 
 Using again the compacity of $\Gamma$ we can fix $x_0=p, \ x_1,\cdots , \ x_n=q$ points in $ \Gamma$, ordered by the parametrization of $\Gamma$ and  satisfying 
  \begin{equation}
 d_N(x_k,x_{k+1}) < \epsilon, \qquad k=0,\cdots, n-1.\label{xxx}
\end{equation}
 
 As $x_1\in D_\epsilon(x_0)$ it is clear,  there is a  unique geodesic arc $\gamma_0$ within  $S \setminus\partial S$  joining $x_0$ to $x_1$ with
 $$ l_N(\gamma_0)  + 2 \epsilon < \pi,$$
 and, following the same ideas as in \cite[{\sl Proof of Lemma B}]{KM},  we can apply an  induction argument on the fixed number of points.
 More specifically, assume there is a unique  geodesic arc $\gamma_{k-1}$ within  $S \setminus\partial S$  from  $x_0$ to $x_{k-1}$ with
 $$ l_N(\gamma_{k-1})  + 2 \epsilon < \pi.$$
 Then we prove the existence of a minimizing geodesic arc $\gamma_k$  from $p$ to $x_k$ satisfying 
\begin{equation} l_N(\gamma_{k})  + 2 \epsilon < \pi.\label{lgg2}
\end{equation}
 In fact, by  applying  the Assertion \ref{asser1} to $\gamma_{k-1}$, there is an open  convex set ${\cal H}_{k-1}$ in  $S \setminus\partial S$  containing 
 $D_\epsilon(p)\cup \gamma_{k-1} \cup D_\epsilon(x_{k-1})$, but from \eqref{xxx}, $x_{k}\in D_\epsilon(x_{k-1})$ and  we find  a minimizing geodesic arc $\gamma_k$  from $p$ to $x_k$ within ${\cal H}_{k-1}$. Moreover,  it is clear from \eqref{lgg} that \eqref{lgg2} holds,
  which concludes the proof. 
\end{proof2}
\begin{lem} $d_N(p,\hat{\partial} C) <\pi$ for any  $p\in S \setminus C$.\label{l42}
\end{lem}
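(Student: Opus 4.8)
The plan is to argue by contradiction: suppose there is a point $p\in S\setminus C$ with $d_N(p,\hat{\partial}C)\ge \pi$ and manufacture a point of $\partial\widetilde{S}$ at which $(\widetilde{S},III)$ is concave, thereby contradicting the pseudo-convexity furnished by Generalized lemma \ref{gl2}. The geometric input is that $N$ is a local isometry from $(S\setminus\partial S,III)$ into $\S^2$, so every geodesic arc of length $<\pi$ issuing from $p$ is minimizing and is carried one-to-one onto a minimizing great-circle arc starting at $N(p)$; in particular geodesic polar coordinates centred at $p$ are well defined as long as the radius stays below $\pi$. Since $d_N(p,\hat{\partial}C)\ge\pi$, no geodesic of length $<\pi$ from $p$ can reach $\hat{\partial}C$, so any full geodesic disc $D_\rho(p)$ with $\rho\le\pi$ that can be formed is automatically contained in $S\setminus C\subset S\setminus\partial S$.

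First I would show that $D_\pi(p)$ is a full geodesic disc on which $N$ is injective, with $N(D_\pi(p))=\S^2\setminus\{-N(p)\}$. To this end set $\rho^\ast=\sup\{\rho\in(0,\pi]:D_\rho(p)\text{ is full and }N|_{D_\rho(p)}\text{ is injective}\}$, a nonempty set because $N$ is a local diffeomorphism near $p$. Injectivity can never be the first property to fail while $\rho<\pi$: two points of $D_\rho(p)$ with the same image would yield two distinct minimizing geodesics from $p$ landing at a common point at spherical distance $<\pi$ from $N(p)$, impossible since such a point admits a unique minimizing geodesic to $N(p)$ on $\S^2$ and $N$ is one-to-one along each arc. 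Hence if $\rho^\ast<\pi$, then fullness must fail, i.e. some geodesic ray from $p$ cannot be prolonged up to length $\rho^\ast$ and converges in $\widetilde{S}$ to a point $\tilde{p}\in\partial\widetilde{S}$ lying over the geodesic circle of radius $\rho^\ast$ about $N(p)$. This is the main obstacle, and here I would reproduce the mechanism of Assertion \ref{asser1}: that circle is a non-geodesic circle of $\S^2$, concave towards its centre $N(p)$, so taking $U=D_{\rho^\ast}(p)$ (on which $N$ is one-to-one and whose image is the spherical cap on the concave side) exhibits an exterior rectangle at $\widetilde{N}(\tilde{p})$ with interior contained in $N(U)$. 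Thus $(\widetilde{S},III)$ would be concave at $\tilde{p}$, contradicting Generalized lemma \ref{gl2}; therefore $\rho^\ast=\pi$ and the image is the whole sphere minus the antipode $-N(p)$.

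Finally I would examine the behaviour at radius $\pi$. For each unit direction $v$ at $p$ the arc $\gamma_v|_{[0,\pi)}$ is $III$-Cauchy (its tail has length $\pi-t\to0$) and its $N$-image tends to $-N(p)$, so it admits a limit $p_v^\ast\in\widetilde{S}$ with $\widetilde{N}(p_v^\ast)=-N(p)$. If some $p_v^\ast\in\partial\widetilde{S}$, then the geodesic circles about $N(p)$ of radius close to $\pi$ -- which are concave towards their centre $-N(p)$ and whose interiors lie in $N(D_\pi(p))=\S^2\setminus\{-N(p)\}$ -- again exhibit concavity of $(\widetilde{S},III)$ there, contradicting Generalized lemma \ref{gl2}. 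Otherwise every $p_v^\ast$ lies in $S$; since $N$ is a local diffeomorphism over $-N(p)$, a continuity and connectedness argument on the circle of directions forces all the $p_v^\ast$ to coincide in a single $p^\ast\in S$, whence $D_\pi(p)\cup\{p^\ast\}$ is at once open and closed in $S$ and isometric to the round $\S^2$. As $S$ carries the nonempty boundary $\partial S$ and $K<0$ holds on this region, such a closed spherical component is impossible -- equivalently, it would be a compact surface in $\R^3$ without elliptic points, just as in the proof of Theorem \ref{th1}. In every case we reach a contradiction, so $d_N(p,\hat{\partial}C)<\pi$, as claimed.
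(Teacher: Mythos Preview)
Your overall strategy---expand a full geodesic disc about $p$ until either the boundary $\partial\widetilde S$ is hit (contradiction via pseudo-convexity) or the whole sphere is covered (topological contradiction)---is natural, and the injectivity step is fine. The gap is in the fullness step, i.e.\ in your proof that $\rho^\ast=\pi$.

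You argue that if some geodesic ray from $p$ stops at a point $\tilde p\in\partial\widetilde S$ with $\widetilde N(\tilde p)$ on the circle of spherical radius $\rho^\ast$ about $N(p)$, then that circle is ``concave towards its centre $N(p)$'' and $U=D_{\rho^\ast}(p)$ exhibits an exterior rectangle inside $N(U)$. This is only correct when $\rho^\ast<\pi/2$. A circle of spherical radius $\rho^\ast$ has two centres, $N(p)$ and $-N(p)$; its side of concavity is towards the \emph{nearer} one. For $\rho^\ast>\pi/2$ the concave side points towards $-N(p)$, so the interior of any exterior rectangle at $\widetilde N(\tilde p)$ lies \emph{outside} the cap $D_{\rho^\ast}(N(p))=N(U)$, and the definition of concavity of $(\widetilde S,III)$ is not satisfied. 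For $\rho^\ast=\pi/2$ the circle is geodesic and the exterior-rectangle notion does not even apply. Hence your appeal to Generalized lemma~\ref{gl2} breaks down precisely once the disc crosses the equator, and nothing rules out $\rho^\ast\in[\pi/2,\pi)$. The same defect reappears in your final paragraph: for radii close to $\pi$ the concave side of $\partial D_\rho(N(p))$ is the small cap about $-N(p)$, so the exterior rectangle is not contained in $N(D_\pi(p))$.

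This is exactly why the paper does not centre the expansion at $p$. It first uses compactness of $\hat\partial C$ to produce $q\in\hat\partial C$ with $d_N(p,q)=\pi$, builds (via Lemma~\ref{l41}) a minimizing geodesic $\gamma$ from $q$ to $p$, and then applies Assertion~\ref{asser1} to segments $\gamma|_{[t,\pi-t]}$. The convex neighbourhoods ${\cal H}_t$ there are bounded not by distance circles about a point but by arcs of tubular/tilted-plane type, whose concave side always faces the interior; letting $t\to 0$ yields the full hemisphere $D_{\pi/2}(m)$ about the midpoint $m$. A further argument with auxiliary geodesics $\gamma_{p_1q},\gamma_{p_2q}$ then shows the closed hemisphere lies in $S\setminus\partial S$ and can be enlarged, forcing nearby points of $\hat\partial C$ at distance $<\pi$ from $p$. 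If you want to salvage your approach, you must replace the naive disc-boundary circle by some curve through $\widetilde N(\tilde p)$ whose concave side lies inside your already-controlled region; that is essentially what Assertion~\ref{asser1} accomplishes.
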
 
\begin{proof}
We argue by contradiction. If  the lemma does not hold, then from the compacity of  $C$ we can suppose there are  $p\in S \setminus C$ and $q\in \hat{\partial} C$ such that  $d_N(p,q)=d_N(p, \hat{\partial} C)=\pi$.

Take  a positive real number  $\epsilon < \min\{\pi/2,r\}$, such that $D_{2\epsilon}(q)$ is a full geodesic disc  in $S\setminus\partial S$ and  consider the circle $\S_\epsilon(q)=\partial D_\epsilon(q)$.  Then,  by fixing $q_1\in \S_\epsilon(q)$ satisfying 
$$ d_N(p,q_1) = d_N(p,\S_\epsilon(q)) = \pi-\epsilon < \pi = \text{\rm min}\{ \text{\rm max$\{d_{N}(p_1,\hat{\partial} C), d_{N}(q,\hat{\partial} C)\}$}+ r, \pi\},$$
we can  apply the Lemma \ref{l41} to $q_1$ and $p$ and  find  a minimizing geodesic $\gamma_1$ in 
$S\setminus\partial S$ from $q_1$ to $p$.  The geodesic ray from $q$ to $q_1$ together $\gamma_1$ is a  minimizing geodesic arc  $\gamma$ in $S\setminus\partial S$  joining $q$ to $p$ with   $l_N(\gamma) = \pi$.

Let $m$ be the midpoint of $\gamma$. For any $t$, $0<t<\pi/2$, we denote by $p_t$ and $ q_t$ the points in $\gamma$  satisfying
$$ d_N(p,p_t) = d_N(q,q_t) = t.$$

Throughout let us assume that $N(m)=(0,-1,0)$, $N(p) = (-1,0,0)$, $N(q)=(1,0,0)$ and  $\gamma$ is mapped one-to-one into the corresponding geodesic arc of $\S^2$ in the plane $\{ z=0\}$.

We also choose $\epsilon_t >0$,  $\epsilon_t< \text{\rm min}\{t,r, \pi/2\}$ such that  $D_{\epsilon_t}(p_t)$, $D_{\epsilon_t}(q_t)$ are full geodesic discs.  Under these conditions we can apply the Assertion \ref{asser1} and prove that there is an open convex subset ${\cal H}_t$ in $S\setminus\partial S$ verifying that $N$ maps ${\cal H}_t$ one-to-one onto the convex subset $N({\cal H}_t)$. But for the construction of ${\cal H}_t$ (see the proof of Assertion \ref{asser1} for more details) we can check  that, see also Figure \ref{convex},
\begin{equation} \lim_{t\rightarrow 0} N({\cal H}_t) = D_{\pi/2}(N(m))= \S^2\cap \{y<0\},
\label{lht}\end{equation} and  $N$ maps  $D_{\pi/2}(m)$ one-to-one  onto $D_{\pi/2}(N(m))$ in $\S^2$, see Figure \ref{ff}.

At this point, we can take a positive real number   $\epsilon_1<\min\{\pi/2,r\}$,  such that $D_{2\epsilon_1}(p)$ is a full geodesic disc and two different points $p_1, p_2\in D_{2\epsilon_1}(p)$  to a distance $\epsilon_1$ from $p$, which are mapped by $N$ into the points $N(p_1)$ and $N(p_2)$ lying on the geodesic arc in the plane $\{ y=0\}$ in such a way that  $N(p_1)$ lies in the half space $\{z>0\}$ and $N(p_2)$  is lying in the half space $\{z<0\}$, see Figure \ref{ff}. 
 \begin{figure}[H]
\center{ \includegraphics[width=0.5\linewidth]{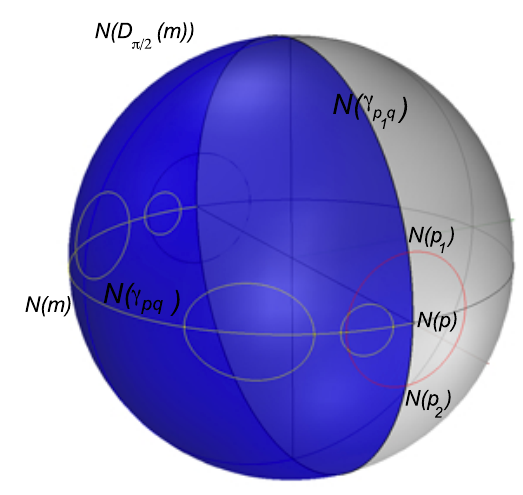}}
\caption{Proving the Lemma}
\label{ff}
\end{figure}
By the choice of $p_1$ and $p_2$ we have $\pi-\epsilon_1\leq d_N(p_i,q)$, $i=1,2$. Moreover, since $N$ is a global isometry from  $D_{\pi/2}(m)$ onto $\{y<0\}$, there exist two sequences of curves $\{\Gamma^i_n\}$ joining $q$ to $p_i$, $i=1,2$ which interior points lie within  $D_{\pi/2}(m)$ and such that $l_N(\Gamma_n^i)\rightarrow \pi-\epsilon_1$.
 Thus,
$$d_N(p_i,q) = \pi-\epsilon_1<\pi= \min\{\max\{d_N(p_i,\hat{\partial}C),d_N(q,\hat{\partial}C)\} + r, \pi\}, \quad i=1,2,$$
and   we can apply  Lemma \ref{l41} to prove  the existence of minimizing geodesic arcs in  $S\setminus\partial S$, $\gamma_{p_1q}$ and $\gamma_{p_2q}$ from $p_1$ to $q$ and from $p_2$ to $q$, respectively.   But,  having in mind that there is a unique geodesic in $\S^2$ of length $\pi-\epsilon_1$ from $p_i$ to $q$, $i=1,2$, we have that $N(\gamma_{p_1q})$ lies in $\{y=0\}$ on the northern hemisphere of $\S^2$ and  $N(\gamma_{p_2q})$ lies in $\{y=0\}$ on the southern hemisphere of $\S^2$. 

Now, consider the following closed subset $A_i$  in $\gamma_{p_iq}$, 
$$A_i=\{x\in\gamma_{p_1q}\ | \ d_N(x,m) = \pi/2\}, \qquad i=1,2$$
It is clear that  there is a neighborhood of $p_i$ in $A_i$. Moreover, by using that  $N$ is a local diffeomorphism, it follows that $A_i$ is also an open subset of $\gamma_{p_iq}$ and so $A_i=\gamma_{p_iq}$, $i=,1,2$.
In other words, there is no point of $\partial \widetilde{S}$ to a distance $\pi/2$ from $m$,  the closure  $\overline{D}_{\pi/2}(m)$ of $D_{\pi/2}(m)$  lies in $S\setminus\partial S$ and $N$ is one-to-one in $\overline{D}_{\pi/2}(m)$. 
Since  $N$ maps $\overline{D}_{\pi/2}(m)$ one-to-one onto the  eastern hemisphere of $\S^2$ while $\overline{D}_{\pi/2}(m)$  is compact there is $\epsilon_2>0$ such that $D_{\pi/2 + \epsilon_2}(m)$ is a full geodesic disc and $N$ maps it one-to-one  onto $N(D_{\pi/2 + \epsilon_2}(m))$. But $q\in \hat{\partial} C$ and $\hat{\partial} C$ is a finite set of regular curves in $\Sigma$, then  by the above construction, we can assert that   there are points of $N(\hat{\partial} C)$ in  $N(D_{\pi/2 + \epsilon_2}(m))$ which  distance from $p$ is less than $\pi$. This fact  is a  contradiction with  the assumption that  $\pi = d_N(p,\hat{\partial} C)$. \end{proof}
\subsection{ Proof of the Theorems \ref{main}, \ref{ths41} and \ref{ths42}.}
\begin{proof3}
To prove Theorem \ref{main} we observe that from Lemma \ref{l42}, $d_N(p,\hat{\partial}C)< \pi$, for any $p\in S\setminus C$,   and from Lemma \ref{l41},  there exists a  minimizing geodesic arc, $\gamma_p$,  joining $p$ to $\hat{\partial} C$. By the minimizing property, $\gamma_p$ meets orthogonally to $\hat{\partial} C$. Thus, the area of $\psi(S)$ respect to the third fundamental form $III$ is given by 
$$ A_N(\psi(S)) = A_N(\psi(C)) + \int_0^{L_N(\hat{\partial} C)} l_N(\gamma_q)  < A_N(\psi(C))  + L_N(\hat{\partial} C) \pi < \infty, $$
where $L_N(\hat{\partial} C)$ is the length of $\hat{\partial} C$ respect to $III$,
which concludes the proof.\end{proof3}
\begin{proof4}
It follows directly from  Theorem \ref{main}.
\end{proof4}
\begin{proof5}
From Theorem \ref{main}, Remark \ref{rk2} and the stated hypothesis in the theorem, the area of $\psi$, $A(\psi(\Sigma)) $,  is estimated by
 $$A(\psi(\Sigma)) \leq \frac{1}{\epsilon}\int_\Sigma |K| dA < \infty.$$

That is, the immersion has finite area. Thus, we are left to show that every end of $\Sigma$  is properly immersed and asymptotic  to a half-line. 

Let us consider an end of $\Sigma$, which we will assume parametrized on the set $E=\{p\in\R^2|\ 0<|p|\leq1\}$. We can also assume that the curvature is non positive for every point on $E$. Since the area of the end is finite, there exists a strictly decreasing sequence of radii $\{\epsilon_n\}$ going to zero such that the curves $\Gamma_n=\psi(\{p\in\R^2|\ |p|=\epsilon_n\})$ satisfy that their length  
\begin{equation}\label{final1}
\{l(\Gamma_n)\}\rightarrow 0.
\end{equation} 

For $n<m$ denote by ${\cal A}_n^m=\psi(\{p\in\R^2|\ \epsilon_m\leq |p|\leq\epsilon_n\})$ and ${\cal A}_n^{\infty}=\psi(\{p\in\R^2|\ 0< |p|\leq\epsilon_n\})$. 
Since the end has non positive curvature at every point, then
\begin{equation}\label{final2}
{\cal A}_n^m\subseteq conv(\Gamma_n\cup \Gamma_m)=conv(conv(\Gamma_n)\cup conv(\Gamma_m))
\end{equation}
(see, for instance, \cite{O2}), where $conv(\cdot)$ denotes the convex hull of a set in $\R^3$. 

Thus, as the end can not be bounded \cite{Bu},  $\cup_{n=1}^{\infty}\Gamma_n$ is unbounded. From this fact and (\ref{final1}), passing to a subsequence if necessary, we can assume  
\begin{equation}\label{final3}
\max\{|p|\,|\ p\in \Gamma_{n}\}<\min\{|p|\,|\ p\in \Gamma_{n+1}\}\quad\text{with}
\quad\left\{\min\{|p|\,|\ p\in \Gamma_{n}\}\right\}\rightarrow\infty.
\end{equation}

Now, for all $n$ we consider two points $q_n\in conv(\Gamma_1)$ and $p_n\in conv(\Gamma_{n+1})$. Then, passing to a subsequence if necessary, we can suppose   there exists a unit vector $v_0$ in $\R^3$ such that 
$$
\left\{\frac{p_n-q_n}{|p_n-q_n|}\right\}\rightarrow v_0.
$$

Since $\{|p_n-q_n|\}\rightarrow\infty$, it is easy to check that the vector $v_0$ does not depend neither on
the chosen points $q_n$ because $conv(\Gamma_1)$ is a bounded set, 
nor on the chosen points $p_n$ because the diameter of $conv(\Gamma_{n+1})$ goes to zero, from (\ref{final1}).

Let us define the solid cylinders
$$
{\cal R}_1^+=\{q+t v_0|\ q\in conv(\Gamma_1),t\geq0\},\qquad {\cal R}_1=\{q+t v_0|\ q\in conv(\Gamma_1),t\in\R\}.
$$
And let us prove that ${\cal A}_1^{\infty}\subseteq{\cal R}_1^+$. From this condition, (\ref{final2}) and (\ref{final3}), we will have that the end is properly immersed.

Assume ${\cal A}_1^{\infty}\not\subseteq{\cal R}_1^+$ then, from (\ref{final2}), there exists $n_0>1$ and a point $x_0\in conv(\Gamma_{n_0})$ such that $x_0\not\in {\cal R}_1^+$. Hence the compact set
$$
\hat{C}=\left\{\frac{x_0-q}{|x_0-q|}\in\S^2\,|\ q\in conv(\Gamma_1)\right\}
$$
does not contain to the vector $v_0$.

Using (\ref{final2}), for each $n>n_0$ there exist two points $q_n\in conv(\Gamma_1)$ and $p_n\in conv(\Gamma_{n+1})$ such that $x_0=(1-t_n)q_n+t_np_n$, for some $t_n\in[0,1]$. In such a case $(p_n-q_n)/|p_n-q_n|\in \hat{C}$ which contradicts that the limit of this sequence must be $v_0$.

Once we have proven that ${\cal A}_1^{\infty}\subseteq{\cal R}_1^+$ if, analogously, we define
$$
{\cal R}_n^+=\{q+t v_0|\ q\in conv(\Gamma_n),t\geq0\},\qquad {\cal R}_n=\{q+t v_0|\ q\in conv(\Gamma_n),t\in\R\},
$$
it is elementary to check that 
\begin{equation}\label{final7}
{\cal A}_n^{\infty}\subseteq{\cal R}_n^+, \qquad \text{ for all $n>1$}.
\end{equation}

Since ${\cal R}_{n+1} \subseteq {\cal R}_n$, then  from (\ref{final1}) we have that   ${\cal R}=\cap_{n=1}^{\infty}{\cal R}_n$ is a line, and (\ref{final7}) proves that the end is asymptotic to ${\cal R}$ as we want to show. 
\end{proof5}
\section{Complete surfaces with non positive extrinsic curvature in $\H^3$ and $\S^3$}
Fundamental results of surfaces' theory  in $\R^3$ essentially, only depend on the Codazzi equation which yields true if we consider  any other space form. This is the case, for example, of Hopf's theorem on the classification of constant mean curvature spheres or Liebmann's theorem about surfaces of positive  constant Gauss curvature. Thus, it is not surprising that various results from theory of immersed surfaces can be proved in the abstract setting of Codazzi pairs, that is,   pairs $(I,II)$ of real quadratic forms on an abstract surface, where $I$ is a Riemannian metric and $II$ satisfies the Codazzi-Mainardi equations of the classical surface theory with respect to the metric $I$.

In this section,  we use the  theory of Codazzi pairs to give  Efimov and Milnor's type results on surfaces in non euclidean space forms. 
The main  idea in this sense is to study  Codazzi pairs $(I,II)$  as a geometric object in a non standard way on a Riemannian surface $(\Sigma, I)$ of non positive Gauss curvature and use this study to deduce consequences  when  $(I,II)$ are  the first and second fundamental forms of a surface immersed in  a space form.
We shall follow the same approach as introduced by Aledo, Espinar and G\'alvez,  \cite{AEG}. 

As in the previous sections, we shall  assume that  $\Sigma$ is an oriented surface (otherwise we would work with its oriented two-sheeted covering). Moreover, throughout we always consider a ${\cal C}^\infty$-differentiability.
\begin{defi}{\rm 
A fundamental pair on $\Sigma$ is a pair of real quadratic forms $(I,II)$ on $\Sigma$, where $I$ is a Riemannian metric. The shape operator $A$ of $(I,II)$ is defined by 
\begin{equation}
II(X,Y) = I(AX,Y), \qquad X,Y\in T\Sigma.\label{so}
\end{equation}

We also define the {\sl mean curvature}, the {\sl extrinsic curvature} and the {\sl principal curvatures} of the pair $(I,II)$ as one half of the trace, the determinant and the eigenvalues of the endomorphism $A$.
It is remarkable that, in general, there is not any connection between the extrinsic curvature of a fundamental pair $(I,II)$ and the Gauss curvature $K(I)$ of the Riemannian metric $I$.
\vspace{.5\baselineskip }

We say that the principal curvatures $k_1$ and $k_2$ of a fundamental pair on $\Sigma$ are {\sl strictly separated} if there exist real numbers $c_1$ and $c_2$ such that
\begin{equation}
k_1 \leq c_1<c_2\leq k_2,\label{cpss}
\end{equation}
 on $\Sigma$.
 }
 \end{defi}
\begin{defi} {\rm Let $(I,II)$  be a fundamental pair, we  say  that $(I,II)$ is a {\sl Codazzi pair} if the following equation holds
\begin{equation}
\nabla_XAY - \nabla_YAX - A[X,Y]=0,\label{ce}
\end{equation}
for any vector field $X,Y\in T\Sigma$, where $\nabla$ is the Levi-Civita connection of $I$.}
\end{defi}
Codazzi pairs appear in a natural way in the study of surfaces. For instance, the first and second fundamental forms of any  surface isometrically immersed in a 3-dimensional space form is a Codazzi pair and the same happens for spacelike surfaces in a 3-dimensional Lorentzian space form. 

In general,  if  a immersed surface in an n-dimensional (semi-Riemannian) space form has a  parallel unit normal vector field $\xi$, then   the first fundamental form and the second fundamental form associated with $\xi$ constitute a Codazzi pair.   Many other examples of Codazzi pairs also appear in \cite{AEG2, AEG,BB,KM3,KM4,OS} and references therein. 

Although we will apply our results on Codazzi pairs  to surfaces in a 3-dimensional space form, all  the above mentioned comments  show that the results can also be applied to many others different contexts.

\subsection{Codazzi pairs on complete surfaces with non positive curvature.}
In this subsection we shall  prove the following result:
\begin{teo}\label{at1}
Let $(I,II)$ be a Codazzi pair on $\Sigma$ with strictly separated principal curvatures. If $(\Sigma,I)$ is a complete surface with Gauss curvature $K(I)\leq 0$, then only one of the following items  hold:
\begin{itemize}
\item $I$ is a flat metric and $\Sigma$ is homeomorphic either a plane,  or a cylinder or  a flat torus.
\item $I$ is not flat, $\Sigma$ is homeomorphic to a plane and 
\begin{equation}
\int_\Sigma |K(I)| \ dA_I \leq 2 \pi.\label{cf}
\end{equation}
\end{itemize}
\end{teo}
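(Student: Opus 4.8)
The plan is to exploit the strict separation of principal curvatures to locally rotate the Codazzi pair $(I,II)$ into a new pair whose extrinsic curvature is nonnegative, thereby manufacturing a nonnegatively curved companion metric on $\Sigma$, and then to apply the structure theory for complete nonnegatively curved surfaces. Concretely, after normalizing via \eqref{cpss} we may assume $k_1 \leq c_1 < c_2 \leq k_2$ and, by subtracting a constant from $II$ (which preserves the Codazzi property since $\nabla_X(cI)Y-\nabla_Y(cI)X-cI[X,Y]=0$ identically), replace $II$ by $II - c\,I$ with $c=(c_1+c_2)/2$, so that the shifted principal curvatures $\tilde k_1 \leq -\delta < 0 < \delta \leq \tilde k_2$ are bounded away from zero with opposite signs, where $\delta=(c_2-c_1)/2$. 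The extrinsic curvature of the shifted pair is then $\tilde k_1 \tilde k_2 \leq -\delta^2<0$ everywhere.

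The key step is to build, out of the Codazzi pair data, a \emph{new Riemannian metric} $\hat I$ on $\Sigma$ whose own Gauss curvature $K(\hat I)$ is nonnegative, in analogy with the parallel surface trick used in the proof of Theorem \ref{th1}. Following the Aledo--Espinar--G\'alvez approach in \cite{AEG}, I would consider the pair $(III, II)$ or a linear combination $a\,I + b\,II + c\,III$ chosen so that the resulting abstract second fundamental form has definite sign; here $III$ denotes the abstract third form $III(X,Y)=I(AX,AY)$ associated to the shape operator $A$. Since $(I,II)$ is Codazzi with $K(I)\leq 0$ and the principal curvatures are strictly separated, the standard Codazzi-pair computation of the Gauss equation for the companion metric should yield $K(\hat I)\geq 0$ on all of $\Sigma$, together with a pointwise comparison $I \leq \lambda \hat I$ for a constant $\lambda$; completeness of $(\Sigma,I)$ then forces completeness of $(\Sigma,\hat I)$. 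This is precisely the abstract analogue of the argument in Theorem \ref{th1}, where $\Lambda_\epsilon = I - \tfrac{4}{\epsilon}II + \tfrac{4}{\epsilon^2}III$ was shown to be complete of nonnegative curvature.

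Once $(\Sigma,\hat I)$ is a complete surface of nonnegative Gauss curvature, I would invoke the classification of such surfaces (Cohn-Vossen / the soul theorem, or directly the structure results cited via Sacksteder in the proof of Theorem \ref{th1}): $\Sigma$ is diffeomorphic to a plane, a cylinder, a M\"obius band, or one of the flat closed surfaces, and in the non-compact non-flat case it has at most one end and total curvature $\int_\Sigma K(\hat I)\,dA_{\hat I}\leq 2\pi$ by Cohn-Vossen's inequality. The two cases of the theorem then correspond to whether $I$ itself is flat: if $K(I)\equiv 0$ the metric $I$ is flat and complete, so $\Sigma$ is a plane, cylinder, or flat torus (the non-orientable possibilities having been excluded by passing to the oriented cover); if $K(I)$ is not identically zero, the nonnegatively curved companion must be a plane (ruling out the torus and cylinder because $K(I)\leq 0$ is incompatible with a compact or doubly-connected underlying surface unless $I$ is flat), whence $\Sigma$ is homeomorphic to a plane.

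The main obstacle, and the step requiring the most care, is establishing the total curvature bound \eqref{cf} with the sharp constant $2\pi$ for the \emph{original} metric $I$ rather than for the companion $\hat I$. One must translate the Cohn-Vossen bound $\int_\Sigma K(\hat I)\,dA_{\hat I}\leq 2\pi$ into a statement about $\int_\Sigma |K(I)|\,dA_I$, which is not automatic since the two metrics differ. The hard part will be showing that the Codazzi structure forces a pointwise or integral identity relating $K(I)$ and $K(\hat I)$ — plausibly of the form $|K(I)|\,dA_I \leq K(\hat I)\,dA_{\hat I}$ — so that finiteness and the $2\pi$ bound transfer directly. I expect this to follow from the Gauss-equation bookkeeping for the shifted pair together with the sign conditions $\tilde k_1<0<\tilde k_2$, but verifying that the inequality holds with the correct constant, and that equality in Cohn-Vossen is compatible only with the planar case, is where the real work lies.
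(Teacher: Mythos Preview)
Your outline is the paper's proof. The companion metric used there is precisely $\Lambda_a = I - 2a\,II + a^2\,III$ with $a$ chosen so that $k_1 \leq c_1 < 1/a < c_2 \leq k_2$; in principal coordinates this reads $(1-ak_1)^2 E\,du^2 + (1-ak_2)^2 G\,dv^2$, which dominates a constant multiple of $I$ (hence is complete) and has $K(\Lambda_a) = K(I)/\bigl((1-ak_1)(1-ak_2)\bigr) \geq 0$ by the Klotz~Milnor curvature formula \cite{KM2}. Your shift $II \mapsto II - cI$ followed by taking the new third form produces, up to a scalar factor, exactly the same metric, so the two constructions coincide.

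The step you single out as the ``main obstacle'' is in fact no obstacle at all, and this is the only substantive point you are missing. Because $(1-ak_1)(1-ak_2) < 0$, the area elements satisfy $dA_{\Lambda_a} = -\,(1-ak_1)(1-ak_2)\,dA_I$, and combining this with the curvature formula above gives the \emph{exact pointwise identity}
\[
K(\Lambda_a)\,dA_{\Lambda_a} \;=\; -K(I)\,dA_I \;=\; |K(I)|\,dA_I.
\]
There is no inequality to fight for: the curvature $2$-forms of $I$ and $\Lambda_a$ agree up to sign. Huber's theorem (equivalently Cohn--Vossen) applied to the complete nonnegatively curved $(\Sigma,\Lambda_a)$ then gives $\int_\Sigma |K(I)|\,dA_I = \int_\Sigma K(\Lambda_a)\,dA_{\Lambda_a} \leq 2\pi$ immediately, and the same identity rules out the sphere via Gauss--Bonnet ($4\pi = -4\pi$). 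Once you commit to the explicit choice of $\hat I$, the whole argument is a two-line computation plus one citation; what you anticipated as the hard part is free.
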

\begin{proof}
Consider the third fundamental form $III$  associated with  the pair $(I,II)$, which is given by 
\begin{equation}
III(X,Y) = I(AX,AY),\qquad  X,Y\in T\Sigma,\label{tff}
\end{equation}
and take  local doubly orthogonal coordinates   $(u,v)$
so that 
\begin{align}
I &= E du^2 + G dv^2,  \nonumber
\\II &= k_1E du^2 + k_2 G dv^2, \label{doc}\\
III &= k_1^2E du^2 + k_2^2G dv^2.\nonumber
\end{align}
Such doubly orthogonal coordinates are locally available on an open dense subset of $\Sigma$ and we can use them to check  identities are  valid on  all the surface.

If  $a\in \R\setminus \{0\}$ satisfies that $a k_1\neq 1$, $ak_2\neq 1$ on $\Sigma$ , then from \eqref{doc},
the quadratic form 
$$ \Lambda_a = I- 2 a II+ a^2 III$$   is a Riemannian metric   given, locally,  by
\begin{equation}  \Lambda_a  = (1 - a k_1)^2 du^2 + (1- a k_2)^2 dv^2, \label{num}
\end{equation}
which Gauss curvature, $K( \Lambda_a)$,  can be written, see \cite{KM2},  as:
\begin{equation}  K(\Lambda_a) = \frac{K(I)}{(1-a k_1) (1-a k_2)}.\label{cnm} 
\end{equation}
From \eqref{cpss}, we can choose $a\in \R\setminus \{0\}$ so that 
$$ k_1\leq c_1 < \frac{1}{a} < c_2 \leq k_2, $$
and, if we take $ c_0 = \min\{|1 - a c_i| : \ i=1,2\}$, then from \eqref{num} and \eqref{cnm}, the  following expressions hold,
\begin{align*}
(1-ak_i)^2&\geq c_0^2, \quad \text{ i=1,2}\\
\Lambda_a &\geq c_0^2 I, \\
K(\Lambda_a) &\geq 0.
\end{align*}
That is, $(\Sigma,\Lambda_a)$ is a complete Riemannian surface of non negative curvature.

We  distinguish two cases:
\vspace{.3\baselineskip }\\
{\sc Case I}: $K(\Lambda_a)$ vanishes identically. In this case,  from \eqref{cnm}, $I$ is also a flat metric. Thus,  if $\overline{\Sigma}$ denotes the universal cover of $\Sigma$,    we have, from  Cartan's theorem,  that $(\overline{\Sigma},I)$  is isometric to the usual euclidean plane $\R^2$.  But then,  we deduce that $\Sigma$ is homeomorphic to $\R^2/\Gamma$, where $\Gamma$ is a discrete group of isometries acting properly  on $\R^2$,  and the only possible oriented cases are the described ones in the first item of the theorem.
\vspace{.3\baselineskip }\\
{\sc Case II}: $K(\Lambda_a)$ does  not vanishes identically. In this case, we can consider on $\Sigma$ the conformal Riemann structure induced by $\Lambda_a$ and using   Huber's results, see \cite[Theorem 10, Theorem 12, Theorem 13]{Hu},  $\Sigma$ must be conformally either a sphere or a plane.
But if $\Sigma$ is a sphere, from classical Gauss-Bonnet's theorem, \eqref{num} and \eqref{cnm}, we have
$$ 4\pi = \int_\Sigma K(\Lambda_a) dA_{\Lambda_a} = -\int_\Sigma K(I) dA_I =-4\pi, $$
which gives a contradiction.

When $\Sigma$ is not compact, it must be homeomorphic to a plane and from  mentioned Huber's results, \eqref{num} and \eqref{cnm},  we also have  the following  inequality:
$$ \int_\Sigma |K(I)| dA_I  = \int_\Sigma K(\Lambda_a)dA_{\Lambda_a} \leq 2\pi, $$
which concludes the proof.
\end{proof}
From  the proof  of Theorem \ref{at1}, we observe that if $\Sigma$ has a compact boundary we  can also apply  Hubber's results to get that $(\Sigma,I)$ has  finite total curvature. Actually,  we can  easily check   the following result holds:
\begin{teo}\label{at2} Let $\Sigma$ be a surface and $C\subset \Sigma$ a compact subset. Assume $(I,II)$ is a Codazzi pair on $\Sigma\setminus C$ which principal curvatures are strictly separated. If  $I$ is a complete   metric with non positive  Gauss curvature on $\Sigma\setminus C$,  then $(\Sigma\setminus C,I)$ has finite total curvature. In particular,  $\Sigma$ is of parabolic type and has finite topology.
\end{teo}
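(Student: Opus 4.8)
The plan is to run the proof of Theorem \ref{at1} essentially verbatim, the only genuinely new feature being the presence of a compact boundary. Set $S=\Sigma\setminus\text{\rm Int}(C)$, a surface with compact boundary $\partial S$ on which $(I,II)$ is a Codazzi pair, $I$ is complete and $K(I)\le 0$. Using the strict separation \eqref{cpss} I would choose $a\in\R\setminus\{0\}$ with $k_1\le c_1<1/a<c_2\le k_2$ and form the auxiliary metric $\Lambda_a=I-2a\,II+a^2\,III$. Exactly as in Theorem \ref{at1}, the local expressions \eqref{num} and \eqref{cnm} hold, and writing $c_0=\min\{|1-ac_i|:i=1,2\}$ one reads off $\Lambda_a\ge c_0^2\,I$ together with $K(\Lambda_a)\ge 0$, the latter because $K(I)\le 0$ while the factor $(1-ak_1)(1-ak_2)$ is negative.

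Next I would record completeness and a pointwise curvature identity. Since $\Lambda_a\ge c_0^2\,I$ with $c_0>0$ and $I$ is complete on $S$, every divergent curve has infinite $\Lambda_a$-length, so $\Lambda_a$ is complete on $S$. From \eqref{num} the area elements satisfy $dA_{\Lambda_a}=|1-ak_1|\,|1-ak_2|\,dA_I$, and combining this with \eqref{cnm} and the sign of $(1-ak_1)(1-ak_2)$ gives the pointwise identity
$$K(\Lambda_a)\,dA_{\Lambda_a}=|K(I)|\,dA_I\qquad\text{on }S,$$
whence $\int_S|K(I)|\,dA_I=\int_S K(\Lambda_a)\,dA_{\Lambda_a}$.

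The heart of the argument is to apply Huber's results, see \cite{Hu}, to the complete surface $(S,\Lambda_a)$. Since $K(\Lambda_a)\ge 0$ its negative part is trivially integrable, so Huber's theorem in the form valid for complete surfaces with compact boundary yields that $(S,\Lambda_a)$ has finite topology and finite total curvature, $\int_S K(\Lambda_a)\,dA_{\Lambda_a}<\infty$. By the displayed identity this gives $\int_S|K(I)|\,dA_I<\infty$, i.e. $(\Sigma\setminus C,I)$ has finite total curvature. I expect the compact boundary to be the only real obstacle: as pointed out in the remark preceding the statement, it does not alter the argument, since $\partial S$ contributes only a finite geodesic-curvature term in the Gauss--Bonnet balance; one may either invoke the boundary form of Huber's results directly, or double $S$ across $\partial S$ to reduce to the boundaryless case.

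Finally, finite topology is a metric-free property, so $S$ --- and hence $\Sigma$, after re-attaching the compact set $C$ --- has finite topology. For the conformal type, now that $\int_S|K(I)|\,dA_I<\infty$ is established and $K(I)\le 0$, the negative part of the curvature of $I$ is itself integrable; applying Huber's results this time to $(S,I)$ shows that $(\Sigma\setminus C,I)$ is conformally a compact surface with finitely many punctures, whose ends coincide with the ends of $\Sigma$ and are conformally punctured discs. Therefore $\Sigma$ is of parabolic type, which completes the plan.
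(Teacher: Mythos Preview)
Your proposal is correct and follows exactly the approach the paper indicates: rerun the construction of $\Lambda_a$ from Theorem~\ref{at1}, use the pointwise identity $K(\Lambda_a)\,dA_{\Lambda_a}=|K(I)|\,dA_I$, and invoke Huber's results \cite{Hu} on the complete nonnegatively curved surface $(S,\Lambda_a)$ with compact boundary to conclude finite total curvature, finite topology, and parabolicity. The paper itself gives no formal proof of Theorem~\ref{at2}, only the remark preceding it that ``if $\Sigma$ has a compact boundary we can also apply Huber's results''; your write-up is a faithful elaboration of that sentence.
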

\subsection{Applications to non Euclidean space forms}
In this subsection we apply the above Theorems \ref{at1} and \ref{at2} to obtain Efimov and Milnor's type results in the hyperbolic space, $\H^3$,  of sectional  curvature $-1$ and in the sphere, $\S^3$, of sectional curvature $1$.

Because Codazzi pairs' theory  appear in the study of surfaces in other target spaces,  analogous results could be given in many others contexts,  for instance spacelike surfaces in the 3-dimensional Lorentzian space form or surfaces in an n-dimensional (semi-Riemannian) space form with  a parallel unit normal vector field $\xi$.

As a first consequence of Theorem \ref{at1} we have,
\begin{cor}\label{ac1}
Let $\psi:\Sigma \longrightarrow \H^3$  be an immersion with  Gauss curvature,  $K\leq -1$, and one of its principal  curvature functions $k_i$ satisfying, 
$$ k_i^2 \geq \epsilon^2 >0,   \qquad \text{ for some constant } \epsilon >0.$$
Then $\psi$ is not a complete immersion.
\end{cor}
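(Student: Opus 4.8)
The plan is to reduce the statement to Theorem \ref{at1}, applied to the fundamental pair $(I,II)$ formed by the first and second fundamental forms of $\psi$. Since $\psi$ is an immersion into a space form, $(I,II)$ is automatically a Codazzi pair, and (working on the oriented two-sheeted covering if necessary) its shape operator and principal curvatures $k_1,k_2$ are well defined. Arguing by contradiction, I would assume that $\psi$ is complete and show that the dichotomy of Theorem \ref{at1} cannot hold.

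First I would translate the curvature hypotheses into information about $k_1,k_2$. The Gauss equation in $\H^3$ reads $K(I)=k_1k_2-1$, so the assumption $K\leq -1$ is equivalent to $k_1k_2\leq 0$; in particular $K(I)\leq -1<0$, so $I$ is a complete metric of strictly negative Gauss curvature. Next I would rule out umbilic points: at such a point $k_1=k_2$ forces $k_1k_2=k_1^2\geq 0$, which together with $k_1k_2\leq 0$ gives $k_1=k_2=0$, contradicting $k_i^2\geq\epsilon^2>0$. Hence the ordered principal curvatures $k_1<k_2$ are globally well defined continuous (indeed smooth) functions on $\Sigma$ with $k_1\leq 0\leq k_2$. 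Using the uniform bound $k_i^2\geq\epsilon^2$ on one of them I would exhibit constants realizing the strict separation \eqref{cpss}: if the curvature bounded away from zero is the nonnegative one then $k_1\leq 0<\epsilon\leq k_2$, while if it is the nonpositive one then $k_1\leq-\epsilon<0\leq k_2$; in either case $k_1\leq c_1<c_2\leq k_2$ for uniform constants $c_1<c_2$.

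With the hypotheses of Theorem \ref{at1} in place, that theorem leaves only two possibilities. The flat case is excluded since $K(I)\leq -1<0$, so $I$ is not flat; therefore $\Sigma$ must be homeomorphic to a plane and $\int_\Sigma|K(I)|\,dA_I\leq 2\pi$.

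The final and, I expect, most delicate step is to contradict this bound. Because $K(I)\leq -1$ we have $|K(I)|\geq 1$ everywhere, so $\int_\Sigma|K(I)|\,dA_I\geq \text{Area}(\Sigma,I)$. On the other hand $(\Sigma,I)$ is complete, simply connected (homeomorphic to a plane) and of curvature $K(I)\leq -1\leq 0$, hence a Cartan--Hadamard surface admitting globally defined geodesic polar coordinates; the Jacobi comparison $f''=-K(I)f\geq f$ with $f(0)=0$, $f'(0)=1$ gives $f(r,\theta)\geq\sinh r$ for the area element, so $\text{Area}(\Sigma,I)=\infty$. This forces $\int_\Sigma|K(I)|\,dA_I=\infty$, contradicting the estimate $2\pi$ supplied by Theorem \ref{at1}, and therefore $\psi$ cannot be complete. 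The main obstacle is precisely this last comparison: one must convert the finite-total-curvature conclusion of Theorem \ref{at1} into a statement about area and then rule it out using completeness together with the uniform pinching $K\leq -1$.
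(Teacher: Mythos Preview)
Your argument is correct and follows essentially the same route as the paper's proof: verify via the Gauss equation that the principal curvatures are strictly separated, apply Theorem~\ref{at1} to rule out the flat case and obtain the bound $\int_\Sigma |K(I)|\,dA_I\le 2\pi$, and then contradict this using $|K(I)|\ge 1$ together with the infinite area of a complete simply connected surface of nonpositive curvature. The only differences are cosmetic---you spell out the umbilic-point exclusion and the Jacobi comparison explicitly, whereas the paper handles the orientation/ordering in one line and simply invokes the well-known infinite-area property for Cartan--Hadamard surfaces (for which $K\le 0$ already suffices; your sharper $\sinh r$ bound is not needed).
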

\begin{proof}
Up to a change of orientation we can assume $k_2\geq \epsilon>0$. Then using the  Gauss equation of the immersion,
$$ k_1 k_2 = K + 1\leq 0,$$
and we have that the principal curvatures of $\psi$ satisfy $ k_1\leq 0< \epsilon \leq k_2$. 
Thus, the pair $(I,II)$ formed by the first and second fundamental forms of  $\psi$ is a Codazzi pair on $\Sigma$ which   principal curvatures are strictly separated.  If  we assume that $I$ is  complete, then by applying   Theorem \ref{at1},  we  deduce  that $\Sigma$ is homeomorphic to a plane and its area is estimated as follows
$$A(\Sigma) \leq \int_\Sigma |K| dA \leq 2\pi,$$
which contradicts the well-known property  that ``{\sl any simply connected complete Riemannian surface of non positive Gauss curvature  has  infinite area}''. 
\end{proof}
\begin{cor}\label{ac2}
Let $\psi:\Sigma \longrightarrow \S^3$  be an immersion with  Gauss curvature,  $K\leq const < 0$, and one of its principal  curvature functions $k_i$ satisfying, 
$$ k_i^2 \geq \epsilon^2 >0,   \qquad \text{ for some constant } \epsilon >0.$$
Then $\psi$ is not a complete immersion.\end{cor}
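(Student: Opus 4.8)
The plan is to mimic the proof of Corollary \ref{ac1} almost verbatim, replacing the Gauss equation of $\H^3$ by that of $\S^3$ and tracking carefully where the \emph{negative} bound on the Gauss curvature is spent. First I would argue by contradiction: assume $\psi$ is a complete immersion and, up to a change of orientation, normalize the curvature hypothesis to $k_2\geq\epsilon>0$. The key observation is that, although the ambient curvature is now $+1$, the Gauss equation for a surface in $\S^3$ reads $k_1k_2=K-1$. Hence the hypothesis $K\leq \text{\rm const}<0$ forces
$$
k_1k_2 = K-1 \leq \text{\rm const}-1 < -1 < 0,
$$
so that $k_1<0<\epsilon\leq k_2$. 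Consequently the Codazzi pair $(I,II)$ given by the first and second fundamental forms of $\psi$ has strictly separated principal curvatures in the sense of \eqref{cpss} (for instance with $c_1=0$ and $c_2=\epsilon$), and, since $K(I)=K\leq\text{\rm const}<0$, the metric $I$ has strictly negative Gauss curvature and is in particular not flat.

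With strict separation and $K(I)\leq 0$ in hand, I would invoke Theorem \ref{at1}. Its first alternative (flat $I$) is excluded because $K\leq\text{\rm const}<0$, so we must be in the second alternative: $\Sigma$ is homeomorphic to a plane and $\int_\Sigma |K(I)|\,dA_I\leq 2\pi$. Here the strict negativity of the bound enters a second time, now to control the area: writing $c_0=|\text{\rm const}|>0$ we have $|K|\geq c_0$ pointwise, whence
$$
A(\Sigma)=\int_\Sigma dA_I \leq \frac{1}{c_0}\int_\Sigma |K(I)|\,dA_I \leq \frac{2\pi}{c_0}<\infty.
$$
Thus $\Sigma$ would be a complete, simply connected Riemannian surface of non positive Gauss curvature with finite area, contradicting the well-known fact that any such surface has infinite area. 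This contradiction shows that $\psi$ cannot be complete.

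I do not expect a genuine obstacle here, since the argument is structurally identical to the hyperbolic case. The only point requiring care is that, unlike in $\H^3$, where $K\leq -1$ already yields $k_1k_2\leq 0$ for free, in $\S^3$ one must use the strict negativity of the curvature bound twice: once to push the extrinsic curvature $K-1$ strictly below $-1$ (which simultaneously guarantees strict separation of the principal curvatures and the non flatness of $I$), and once again to turn the total curvature estimate of Theorem \ref{at1} into a finite-area estimate via $|K|\geq c_0>0$. This is precisely why the statement assumes $K\leq\text{\rm const}<0$ rather than merely $K<0$.
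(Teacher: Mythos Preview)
Your proof is correct and follows essentially the same approach as the paper: the paper's own argument also normalizes to $k_2\geq\epsilon>0$, uses the Gauss equation $k_1k_2=K-1<-1$ to obtain strict separation of the principal curvatures, and then refers back to the proof of Corollary~\ref{ac1} (i.e., invokes Theorem~\ref{at1} and the infinite-area property of complete simply connected non positively curved surfaces) to derive the contradiction. Your write-up simply spells out in full the step the paper abbreviates as ``as in the above corollary,'' including the explicit use of $|K|\geq c_0>0$ to pass from the total curvature bound to a finite-area bound.
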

\begin{proof}
As above, we can assume $k_2\geq \epsilon>0$. Then, using the  Gauss equation of the immersion,
$$ k_1 k_2 = K - 1 < -1$$
and  the principal curvatures of $\psi$ satisfy the following relation $ k_1\leq 0< \epsilon \leq k_2$. 
Now, the proof follows by applying Theorem \ref{at1} as in the above corollary.
\end{proof}
\begin{remark} {\rm A direct consequence of Theorem \ref{at1} is the non existence of  complete immersed surfaces in $\H^3$ with Gauss curvature $K\leq const <0$  and strictly separated principal curvatures.} 
\end{remark}
\begin{remark} {\rm By changing the hypothesis $K\leq  const <0$ by $K\leq 0$ and $\int_\Sigma |K| dA_I > 2\pi$ in Corollary \ref{ac2} we have the same conclusion.} 
\end{remark}
As straightforward  consequences of Theorem \ref{at2},  we also have,
\begin{cor}\label{c4}
 Let $\psi:\Sigma \longrightarrow \H^3 $ be a complete immersion 
 and $C\subset \Sigma$ a compact subset. Assume that  on $\Sigma\setminus C$ the Gauss curvature of $\psi$ verifies $K\leq -1$ and one of its principal  curvature functions $k_i$ satisfies, 
$$ k_i^2 \geq \epsilon^2 >0,   \qquad \text{ for some constant } \epsilon >0.$$ 
Then $\psi$ has finite area, $\Sigma$ is parabolic and has finite topology.
 \end{cor}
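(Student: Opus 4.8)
The plan is to reduce the statement directly to Theorem \ref{at2}, following the same mechanism used in Corollary \ref{ac1} but now localized to the complement of the compact set $C$. First I would normalize the orientation so that the controlled principal curvature is the positive one: after a change of orientation I may assume $k_2 \geq \epsilon > 0$ on $\Sigma \setminus C$. The Gauss equation for a surface in $\H^3$ reads $k_1 k_2 = K + 1$, so on $\Sigma \setminus C$ the hypothesis $K \leq -1$ gives $k_1 k_2 \leq 0$; combined with $k_2 \geq \epsilon$ this forces $k_1 \leq 0$. Hence on $\Sigma \setminus C$ the principal curvatures obey $k_1 \leq 0 < \epsilon \leq k_2$, i.e.\ they are strictly separated in the sense of \eqref{cpss}.

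Next I would verify the remaining hypotheses of Theorem \ref{at2} for the pair $(I,II)$ given by the first and second fundamental forms of $\psi$. This pair is a Codazzi pair on $\Sigma \setminus C$, since the Codazzi--Mainardi equations are satisfied by any surface immersed in a space form. Its intrinsic Gauss curvature is $K(I) = K \leq -1 \leq 0$ on $\Sigma \setminus C$. Finally, completeness of $(\Sigma \setminus C, I)$ follows from completeness of $\psi$: after, if needed, enlarging $C$ to a compact domain with smooth boundary (which only shrinks $\Sigma \setminus C$ and therefore preserves all the hypotheses), $\Sigma$ minus the interior of $C$ is a complete manifold with boundary. Applying Theorem \ref{at2} I conclude that $(\Sigma \setminus C, I)$ has finite total curvature and that $\Sigma$ is of parabolic type with finite topology.

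It then remains to obtain finite area, and here I would use the strict bound $K \leq -1$ rather than merely $K \leq 0$. On $\Sigma \setminus C$ one has $|K| = -K \geq 1$, so
$$A(\Sigma \setminus C) = \int_{\Sigma \setminus C} dA \leq \int_{\Sigma \setminus C} |K| \, dA < \infty,$$
the last integral being finite by the finite-total-curvature conclusion just obtained. Since $C$ is compact its area is finite, so $\psi$ has finite area, completing the argument.

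I do not anticipate a deep obstacle, as the statement is essentially a corollary of Theorem \ref{at2}. The only points that demand care are the sign bookkeeping in the Gauss equation (needed to extract $k_1 \leq 0$ and hence the strict separation of the principal curvatures) and the mild technical step of arranging a complete metric on the complement of $C$, so that Theorem \ref{at2} applies verbatim.
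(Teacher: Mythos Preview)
Your proposal is correct and follows essentially the same route as the paper: the authors present Corollary~\ref{c4} as a ``straightforward consequence'' of Theorem~\ref{at2}, and your argument (normalize so that $k_2\geq\epsilon$, use the Gauss equation $k_1k_2=K+1\leq 0$ to obtain the strict separation $k_1\leq 0<\epsilon\leq k_2$, apply Theorem~\ref{at2}, and then bound the area via $|K|\geq 1$) is exactly the mechanism they have in mind, mirroring the proof of Corollary~\ref{ac1}.
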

\begin{cor}\label{c5}
 Let $\psi:\Sigma \longrightarrow \S^3 $ be a complete immersion 
 and $C\subset \Sigma$ a compact subset. Assume that  on $\Sigma\setminus C$ the Gauss curvature of $\psi$ verifies $K\leq const < 0$ and one of its principal  curvature functions $k_i$ satisfies, 
$$ k_i^2 \geq \epsilon^2 >0,   \qquad \text{ for some constant } \epsilon >0.$$ 
Then $\psi$ has finite area, $\Sigma$ is parabolic and has finite topology.
\end{cor}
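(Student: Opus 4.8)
The plan is to derive Corollary \ref{c5} from Theorem \ref{at2} by checking, on $\Sigma\setminus C$, that the fundamental pair of $\psi$ is a Codazzi pair with strictly separated principal curvatures and that the induced metric $I$ is complete and has non positive Gauss curvature there. First I would recall that, since $\psi$ is an isometric immersion into a space form, its first and second fundamental forms $(I,II)$ constitute a Codazzi pair on all of $\Sigma$, hence in particular on $\Sigma\setminus C$, which places us in the setting of Theorem \ref{at2}.

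Next I would use the Gauss equation of $\S^3$ to control the signs of the principal curvatures on $\Sigma\setminus C$. Because the ambient sectional curvature is $1$, the intrinsic Gauss curvature satisfies $K=1+k_1k_2$, so that $k_1k_2=K-1$. Writing the upper bound as $K\leq -a$ with $a>0$, we obtain $k_1k_2=K-1\leq -a-1<-1<0$ on $\Sigma\setminus C$. Up to a change of orientation I may assume that the distinguished principal curvature is $k_2$, so that $k_2\geq\epsilon>0$; the relation $k_1k_2<0$ then forces $k_1<0$ throughout $\Sigma\setminus C$. Consequently
$$ k_1\leq 0<\epsilon\leq k_2, $$
which is exactly the strict separation \eqref{cpss} with $c_1=0$ and $c_2=\epsilon$. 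Moreover $K(I)=K\leq -a<0$ on $\Sigma\setminus C$, so $I$ has non positive Gauss curvature there, and $I$ is complete on $\Sigma\setminus C$ because $\psi$ is complete and $C$ is compact, so that every divergent curve in $\Sigma\setminus C$ which avoids the compact boundary $\hat{\partial}C$ has infinite $I$-length.

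Having verified the hypotheses, I would invoke Theorem \ref{at2} to conclude immediately that $(\Sigma\setminus C,I)$ has finite total curvature, that $\Sigma$ is of parabolic type, and that $\Sigma$ has finite topology. To extract the finiteness of the area I would reuse the upper bound on $K$: on $\Sigma\setminus C$ we have $|K|=-K\geq a>0$, whence
$$ a\, A(\Sigma\setminus C)\leq\int_{\Sigma\setminus C}|K|\,dA_I<\infty, $$
so that $A(\Sigma\setminus C)<\infty$; adding the finite area of the compact piece $C$ shows that $\psi$ has finite area.

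Since the argument is essentially a verification of hypotheses, I do not expect a serious obstacle. The two points requiring a little care are the deduction of strict separation --- where the pointwise lower bound $k_i^2\geq\epsilon^2$ on a \emph{single} principal curvature must be combined with the sign of $k_1k_2$ coming from the Gauss equation to pin down the sign of the other principal curvature \emph{uniformly} --- and the passage from finite total curvature to finite area, which is not part of the conclusion of Theorem \ref{at2} itself but follows from the strict negativity of $K$ away from $C$. The scheme is identical to that of the $\H^3$ case in Corollary \ref{c4}, the only difference being the constant in the Gauss equation, namely $K=1+k_1k_2$ in place of $K=-1+k_1k_2$.
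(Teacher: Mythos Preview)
Your proof is correct and follows exactly the route the paper intends: it verifies the hypotheses of Theorem \ref{at2} on $\Sigma\setminus C$ via the Gauss equation $k_1k_2=K-1<0$ in $\S^3$ (precisely as in the proof of Corollary \ref{ac2}), invokes that theorem for finite total curvature, parabolicity and finite topology, and then recovers finite area from the strict negativity of $K$. The paper itself gives no explicit proof, presenting the corollary as a ``straightforward consequence'' of Theorem \ref{at2}, which is just what you have written out.
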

\begin{remark} {\rm One can apply the above corollaries to prove that in $\H^3$ (respectively, $\S^3$) any complete end with Gauss curvature $K\leq -1$ (respectively, $K\leq const < 0$) and one of its principal curvatures $k_i$ satisfying 
$$ k_i^2 \geq \epsilon^2 >0,   \qquad \text{ for some constant } \epsilon >0$$ 
has finite area and finite total curvature.}
\end{remark}

\end{document}